\newtheorem{theorem}{Theorem}
\newtheorem{proposition}{Proposition}
\newtheorem{lemma}{Lemma}
\newtheorem{remark}{Remark}
\begin{document}

\title[Unique continuation in water-waves]{Controllability for a non-local formulation of surface gravity waves}

\author[M. A. Fontelos]{M. A. Fontelos}
\address[M. A. Fontelos]{ICMAT-CSIC, C/Nicol\'as Cabrera, no. 13--15 Campus de Cantoblanco, UAM, 28049 Madrid, Spain.}
\email{marco.fontelos@icmat.es}

\author[R. Lecaros]{R. Lecaros}
\address[R. Lecaros]{Departamento de Matem\'atica, Universidad T\'ecnica Federico Santa Mar\'ia,  Santiago, Chile.}
\email{rodrigo.lecaros@usm.cl}

\author[J. L\'opez-R\'ios]{J. L\'opez-R\'ios}
\address[J. L\'opez-R\'ios]{(Corresponding Author) Universidad Industrial de Santander, Escuela de Matemáticas, A.A. 678, Bucaramanga, Colombia}
\email{jclopezr@uis.edu.co}

\author[A. P\'erez]{A. P\'erez}
\address[A. P\'erez]{Departamento de Matem\'atica, Universidad del B\'io-B\'io, Concepci\'on, Chile.}
\email{aaperez@ubiobio.cl}

\keywords{Controllability, gravity waves, the sloshing problem, Hilbert transform}

\thanks{R. Lecaros, J. L\'opez-R\'ios and A. P\'erez have been partially supported by the Math-Amsud project
	CIPIF 22-MATH-01. R. Lecaros was partially supported by FONDECYT (Chile) Grant 1221892. J. L\'opez-R\'ios acknowledges support by Vicerrectoría de Investigación y Extensión of Universidad Industrial de Santander, project 3752.
	M. A. Fontelos was supported by projects TED2021-131530B-I00 and PID2020-113596GB-I0.
}

\maketitle

\begin{abstract}
In this paper, we study the approximate controllability of a system governed by an evolution problem known as the sloshing problem. This problem involves a spatial, nonlocal differential operator inherent in the dynamics of a two-dimensional, incompressible, non-viscous fluid within a confined domain. Our work establishes unique continuation results that enable the application of source control localized in an interior domain, allowing the aforementioned controllability.
\end{abstract}

\section{Introduction}

In this work, we are concerned with a linearized, two-dimen-sional non-local version of the water-waves system (see \cite{fontelos2023controllability,lannes2013water}) in the context of the sloshing problem and the control of the oscillations of a free surface liquid on bounded domains. We consider the initial value problem
\begin{equation}\label{I1}
	\begin{cases}
		\phi_{tt}+\mathcal{A}\phi=f, \quad &(t,x)\in(0,\infty)\times(-1,1), \\
		\phi(0,x)=\phi_0(x), &x\in(-1,1), \\
		\phi_t(0,x)=\phi_1(x), &x\in(-1,1), \\
	\end{cases}
\end{equation}
where $\phi(t,x)$ is a real-valued function and the integral, non-local operator $\mathcal{A}$, is given by (see section \ref{s2} below for details)
\begin{equation}\label{A}
	\mathcal{A}\phi:=\partial_x\left(\frac{\sqrt{1-x^2}}{\pi}P.V.\int_{-1}^{1}\frac{\phi(t,\xi)}{\sqrt{1-\xi^2}(x-\xi)}d\xi\right).
\end{equation}

The system (\ref{I1}) was first proposed in \cite{fontelos2023controllability} in the context of studying the exact controllability with an inner control acting as a source. The practical question was to control the oscillations of an inviscid, incompressible fluid-free surface on general bounded two-dimensional domains in connection with the avoidance of undesirable splashing appearing in the cooper conversion process (see \cite{brimacombe1985toward,godoy2008modeling}).

We will present in this work the existence of a general unique continuation property for the water-waves system, written here in the context of the system (\ref{I1}). Moreover, we will connect this principle with the interesting problem within the frame of hyperbolic Partial Differential Equations (PDE), namely, the approximate control property of the solution of (\ref{I1}) by source inner controls on subsets of the domain.

Let $L^2(-1,1)$ be the space of all real-valued square-integrable functions with the inner product $(\cdot,\cdot)_{L^2(-1,1)}\equiv(\cdot,\cdot)_{L^2}$, and the corresponding norm $\|\cdot\|$. Given $w(x)=:\sqrt{1-x^2}$, we are going to consider the weighted  $L^{2}$ spaces
\begin{equation*}
	L_{w}^{2}(-1,1)\equiv \left\{v:\|v\|_w<\infty\right\},
\end{equation*}
where the weighted $L^{2}_{w}$-norm (resp. $L^{2}_{w^{-1}}$) is defined by
\begin{equation*}
	\|v\|^2_{L^2_w}\equiv\|v\|^2_{w}:=\int_{-1}^{1}w\left\vert
	v\right\vert^{2}dx \ \left(\mbox{resp.} \int_{-1}^{1}w^{-1}|v|^{2}dx \right).
\end{equation*}
Moreover, we define  
\begin{equation}\label{psi1}
	H_{w^{-1}}^{1/2}(-1,1)=\left\{\psi\in L^2\; :\left\Vert\psi\right\Vert_{H_{w^{-1}}^{1/2}}^2:=\|\psi\|_{w^{-1}}^2+
	(\mathcal{A}\psi,\psi)_{L^2}<\infty \right\}.
\end{equation}
If it does not lead to confusion, we write $L_{w}^{2}$ (resp. $H_{w^{-1}}^{1/2}$) instead of $L_{w}^{2}(-1,1)$ (resp. $H_{w^{-1}}^{1/2}(-1,1)$).

Our first main result in this work, which proof is presented in Section \ref{sec:4}, is the following unique continuation property for the system (\ref{I1}).
\begin{proposition}
	\label{I_cp}  Let $T>0$, $[\phi_0,\phi_1]\in H_{w^{-1}}^{1/2}\times L^2$ and  $f:\mathbb{R}\longrightarrow \mathbb{R}$ Lipschitz, i.e. there exists $c>0$  a positive constant such that $|f(x)|\le c|x|$. 
	
	If $\phi=0$ in an open subset $M\subset (0,T)\times (-1,1)$, where $\phi\in C((0,T);H^{1/2}_{w^{-1}})$ is the solution to
	\begin{equation}\label{ac3_9}
		\begin{cases}
			\phi_{tt}+\mathcal{A}\phi=f(\phi), \quad &(t,x)\in(0,T)\times(-1,1), \\
			\phi(0,x)=\phi_0(x), &x\in(-1,1), \\
			\phi_t(0,x)=\phi_1(x), &x\in(-1,1).
		\end{cases}
	\end{equation}
	Then
	\begin{equation*}
		\phi\equiv0, \quad\text{in } (0,T)\times(-1,1).
	\end{equation*}
\end{proposition}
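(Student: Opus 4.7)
The plan is to first linearize the equation using the Lipschitz bound, then use the description of $\mathcal{A}$ as a (weighted) Dirichlet-to-Neumann operator in order to reduce spatial unique continuation on $(-1,1)$ to classical unique continuation for harmonic functions in a two-dimensional fluid domain, and finally to propagate the vanishing in time by the well-posedness of the resulting linear evolution with bounded potential.

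For the linearization, the hypothesis $|f(s)|\le c|s|$ yields in particular $f(0)=0$, so that setting $V(t,x):=f(\phi(t,x))/\phi(t,x)$ (with $V:=0$ where $\phi=0$) gives $\|V\|_{L^{\infty}((0,T)\times(-1,1))}\le c$ and reduces matters to the linear equation $\phi_{tt}+\mathcal{A}\phi=V\phi$ with bounded potential. For the spatial step I would invoke the description of $\mathcal{A}$ recalled in Section \ref{s2} as the weighted Dirichlet-to-Neumann operator for the Laplace equation in a two-dimensional fluid domain whose free-surface boundary is $(-1,1)\times\{0\}$. Write $\tilde{\phi}(t,\cdot,\cdot)$ for the harmonic extension of $\phi(t,\cdot)$. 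The hypothesis $\phi\equiv 0$ on $M=(t_{1},t_{2})\times(a,b)$ gives $\phi_{tt}\equiv 0$ and $f(\phi)\equiv 0$ on $M$, so the equation forces $\mathcal{A}\phi\equiv 0$ on $M$ as well. Thus, for each $t\in(t_{1},t_{2})$, the harmonic function $\tilde{\phi}(t,\cdot,\cdot)$ has both vanishing Dirichlet trace and vanishing (weighted) Neumann trace on the open boundary arc $(a,b)\times\{0\}$. Classical unique continuation for harmonic functions (Schwarz reflection followed by the identity principle for real-analytic functions) then forces $\tilde{\phi}(t,\cdot,\cdot)\equiv 0$ in the entire fluid domain, so that $\phi(t,x)=0$ for all $x\in(-1,1)$ and $t\in(t_{1},t_{2})$.

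To propagate in time I would pick any $t_{0}\in(t_{1},t_{2})$: the previous step yields $\phi(t_{0},\cdot)\equiv 0$ on $(-1,1)$, and differentiating in $t$ also $\phi_{t}(t_{0},\cdot)\equiv 0$. Using the well-posedness of the linear equation $\phi_{tt}+\mathcal{A}\phi=V\phi$ with $V\in L^{\infty}$, a Duhamel representation combined with a Gr\"onwall inequality applied forward and backward in time from $t_{0}$ forces $\phi\equiv 0$ on $(0,T)\times(-1,1)$, which is the desired conclusion.

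The main obstacle is the harmonic-extension step. One must verify that the Dirichlet and weighted Neumann traces of $\tilde{\phi}$ on the sub-arc $(a,b)\times\{0\}$ are regular enough for the classical unique continuation theorem to apply, and in particular one has to cope with the degeneracy of the weight $\sqrt{1-x^{2}}$ near the endpoints $x=\pm 1$. This is precisely where the functional setting $H^{1/2}_{w^{-1}}$ introduced in \eqref{psi1} is used, since it guarantees that $\tilde{\phi}(t,\cdot,\cdot)$ is a genuine finite-energy harmonic extension to which the classical unique continuation theory for the Laplacian can be applied.
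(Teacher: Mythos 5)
Your proposal is correct and follows essentially the same route as the paper: reduce to the statement that $\phi$ and $\mathcal{A}\phi$ vanish on an open spatial set, transfer this via the harmonic extension (the identity $\varphi_y(\cdot,0)=\mathcal{A}\phi$) to vanishing Cauchy data for a harmonic function on a boundary arc, conclude $\phi(t_0,\cdot)\equiv 0$ and $\phi_t(t_0,\cdot)\equiv 0$ by classical unique continuation for the Laplacian, and then propagate in time. Your explicit linearization $V=f(\phi)/\phi$ with a forward--backward Gr\"onwall estimate is a slightly more detailed version of the paper's appeal to uniqueness for \eqref{ac3_9} (and in fact uses only the stated bound $|f(s)|\le c|s|$), but it is the same argument in substance.
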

Unique continuation properties are useful in the context of control and inverse problems (e.g., \cite{LL2019}, \cite{LRL:2012}) and the water-waves system and some of its approximating models. Its usefulness in control theory is due to the equivalence with the approximate controllability for linear PDE (see \cite{zuazua2007controllability}), and it is involved in the classical uniqueness/compactness
approach in the proof of the stability for a PDE with localized damping. Unique continuation properties for several approximating models of the water-waves equation, as models for the propagation of one-dimensional, unidirectional waves, are proved by different methods. For instance, in the case of the KdV equation, with the aid of some Carleman estimates in \cite{escauriaza2007uniqueness,rosier2006global,saut1987unique}, by the inverse scattering approach in \cite{zhang1992unique}, and in \cite{bourgain1997compactness} by a perturbative approach and Fourier analysis. Additionally, in recent work by C. Kenig et al., \cite{kenig2020uniqueness}, several unique continuation properties are presented for the one-dimensional Benjamin-Ono and the Intermediate Long Wave equations.

The second novelty result of this paper is to apply the aforementioned principles to obtain the interior approximate controllability of the PDE (\ref{I1}). We will follow methods developed for the linear wave equation, as in \cite{fontelos2023controllability,zuazua2007controllability}, and apply them to our linear, non-local hyperbolic PDE (\ref{I1}), involving a self-adjoint operator. First, we extend the inner controllability property established in \cite{fontelos2023controllability}, to the approximate control of (\ref{I1}), by a source control $f=v\mathbf{1}_\mathcal{I}$, where $\mathbf{1}_\mathcal{I}$ denotes the characteristic function of the nonempty open subset $\mathcal{I}\subset(-1,1)$. To be more precise, we achieved the following controllability result.

\begin{theorem}\label{I_ac} Let $T>0$ and $[\phi_0,\phi_1]\in H_{w^{-1}}^{1/2}\times L^2$.  
	The system
	\begin{equation}\label{ac3_11}
		\begin{cases}
			\phi_{tt}+\mathcal{A}\phi=v\mathbf{1}_\mathcal{I}, \quad &(t,x)\in(0,T)\times(-1,1), \\
			\phi(0,x)=\phi_0(x), &x\in(-1,1), \\
			\phi_t(0,x)=\phi_1(x), &x\in(-1,1).
		\end{cases}
	\end{equation}
	is approximately controllable with a control $v\in L^2(0,T;L_w^2(-1,1))$, on a nonempty open set $\mathcal{I}\subset(-1,1)$, i.e., for any $\epsilon>0$ and $[\phi_0,\phi_1], \ [g_0,g_1]\in H_{w^{-1}}^{1/2}\times L^2$, there exists a control function $v\in L^2(0,T;L^2_w)$ such that the solution of (\ref{ac3_11}) satisfies
	\[ \|[\phi(T,\cdot),\phi_{t}(T,\cdot)]-[g_0,g_1]\|_{H_{w^{-1}}^{1/2}\times L^2}\le\epsilon. \]
\end{theorem}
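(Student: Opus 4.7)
The plan is to prove approximate controllability via the classical duality argument: reduce the density of the reachable set to an injectivity/unique-continuation statement for the homogeneous adjoint problem, and then invoke Proposition \ref{I_cp} with $f\equiv 0$. By linearity of (\ref{ac3_11}), it suffices to treat the case $[\phi_0,\phi_1]=0$ and prove that the reachable set
\begin{equation*}
R(T):=\bigl\{[\phi(T,\cdot),\phi_t(T,\cdot)]\,:\,v\in L^2(0,T;L^2_w)\bigr\}
\end{equation*}
is dense in the Hilbert space $\mathcal{H}:=H^{1/2}_{w^{-1}}\times L^2$. Indeed, splitting the solution of (\ref{ac3_11}) as the sum of the free evolution starting from $[\phi_0,\phi_1]$ and a controlled solution starting from rest, the general approximation reduces to approximating $[g_0,g_1]-[\phi^h(T),\phi^h_t(T)]\in\mathcal{H}$ by an element of $R(T)$.

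Next, I would invoke the Hahn-Banach theorem: the subspace $R(T)\subset\mathcal{H}$ is dense if and only if every continuous linear functional on $\mathcal{H}$ vanishing on $R(T)$ is trivial. Representing such a functional by terminal data $[\eta_0,\eta_1]$ and defining $\psi$ as the solution of the homogeneous adjoint problem
\begin{equation*}
\psi_{tt}+\mathcal{A}\psi=0, \qquad [\psi(T),\psi_t(T)]=[\eta_0,\eta_1],
\end{equation*}
I would multiply (\ref{ac3_11}) by $\psi$ and integrate over $(0,T)\times(-1,1)$. Using the self-adjointness of $\mathcal{A}$ with respect to the $L^2$ inner product (which already underlies the definition (\ref{psi1})) together with the vanishing of the initial data of $\phi$, two integrations by parts in $t$ yield the duality identity
\begin{equation*}
\int_{-1}^{1}\bigl[\phi_t(T)\,\eta_0-\phi(T)\,\eta_1\bigr]\,dx=\int_0^T\!\!\int_{\mathcal{I}} v\,\psi\,dx\,dt.
\end{equation*}
Since $v$ ranges freely over $L^2(0,T;L^2_w)$, the condition that $[\eta_0,\eta_1]$ annihilates $R(T)$ translates exactly into $\psi\equiv 0$ on the open set $(0,T)\times\mathcal{I}$.

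The conclusion then follows directly from Proposition \ref{I_cp}: the adjoint solution $\psi$ solves (\ref{ac3_9}) with right-hand side $f\equiv 0$ (for which the Lipschitz hypothesis is trivial), and it vanishes on the open set $M=(0,T)\times\mathcal{I}$; therefore $\psi\equiv 0$ on $(0,T)\times(-1,1)$, and in particular $[\eta_0,\eta_1]=[\psi(T),\psi_t(T)]=0$. This yields the density of $R(T)$ in $\mathcal{H}$ and hence the approximate controllability.

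The main obstacle I anticipate is not the abstract duality scheme but its rigorous execution in the weighted functional framework. One must verify that the backward adjoint Cauchy problem is well posed in $\mathcal{H}$, so that $\psi\in C([0,T];H^{1/2}_{w^{-1}})$ as required by Proposition \ref{I_cp}; that the trace $[\phi(T),\phi_t(T)]$ depends continuously on $v\in L^2(0,T;L^2_w)$ with values in $\mathcal{H}$, so the controllability operator is well defined; and that the bilinear pairing $\int\phi_t(T)\eta_0-\phi(T)\eta_1\,dx$ represents arbitrary continuous functionals on $\mathcal{H}$ (via an appropriate identification with the dual). These technical points rest on the semigroup/energy theory for the self-adjoint operator $\mathcal{A}$ developed in Section \ref{s2}, together with a careful use of the weighted Cauchy-Schwarz inequality to bound $\int v\,\psi\,dx\,dt$ by $\|v\|_{L^2(0,T;L^2_w)}\|\psi\|_{L^2(0,T;L^2_{w^{-1}})}$.
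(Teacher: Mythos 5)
Your proposal is correct and follows essentially the same route as the paper's proof: reduction to zero initial data by linearity, introduction of the backward adjoint system, the integration-by-parts duality identity converting the orthogonality condition on the reachable set into the vanishing of the adjoint solution on $(0,T)\times\mathcal{I}$, and then Proposition \ref{I_cp} (with $f\equiv 0$) to force the adjoint solution, and hence the terminal data, to vanish. The only differences are a sign convention on the terminal velocity of the adjoint state and your more explicit enumeration of the functional-analytic technicalities (backward well-posedness, continuity of the control-to-state map, duality pairing), which the paper leaves implicit.
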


Thus, the question is whether the solution of the system (\ref{ac3_11}) can be driven, from any initial state, sufficiently close to any final state in time $T$ using the action of a control with support in an open subset of $(-1,1)$. The proof of this result is presented in Section \ref{sec:4}. The reason why we have stated the above theorem as the approximate control of a linear equation with right-hand side $f=v\mathbf{1}_{\mathcal{I}}$, is because the latter term represents an injection of fluid jets through the rigid walls of the domain as shown in \cite{fontelos2023controllability}.

Controlling the surface by different methods is of practical interest in oceanography, controllability, and inverse problems theory. In this context of the two-dimensio-nal water-waves system on bounded domains, let us mention the works by Reid et al. \cite{reid1995control,reid1985boundary} where the authors addressed the null-controllability of the free surface by a source control for the linear conservation laws, both the capillarity and the non-capillarity version, respectively. Concerning nonlinear water-waves, we mention the works by Alazard \cite{alazard2017stabilization} for a two-dimensional rectangular domain, where the stabilization by an external pressure acting on a small portion of the free surface is addressed, and \cite{alazard2018control} where the local exact controllability of the two-dimensional full water-waves system by controlling the external pressure on a localized portion of the surface is studied. Moreover, in \cite{alazard2018boundary}, the author studied the boundary observability problem in a three-dimensional rectangular domain and obtained an estimate for the
energy of the system in terms of the surface velocity at the contact line with a vertical wall. About the literature concerning the control of waves by wave-makers, controllability, and stability issues in the water-waves context, we mention \cite{mottelet2000controllability,su2021strong,su2020stabilizability}. Finally, some related works using the optimal control approach are \cite{nersisyan2014generation}, where the ‘best’ moving solid bottom generating a prescribed wave is designed in the context of a BBM-type equation, and \cite{fontelos2017bottom} in the context of the general full water waves system and the inverse problem of the detection of a bottom from measurements of the free surface profile. Given the aforementioned variety of models and the diversity of the problems to be studied, it is essential to explore the extent to which the controllability problem properties established here hold for the general water-waves and non-local hyperbolic systems (refer to \cite{fontelos2017bottom}).

System (\ref{I1}) describes the evolution of the oscillations for the free boundary of gravity waves in contact with a solid container in connection with the so-called sloshing problem; namely, Dirichlet or Neumann type boundary conditions need to be imposed at the solid walls in contact with the fluid (see \cite{benjamin1979gravity,graham1983new}). Let us remark that system (\ref{I1}) shares some common properties with the wave equation due to the presence of a self-adjoint differential operator, denoted as $\mathcal{A}$ (see section \ref{sec:A:operator} for details). However, a crucial distinction arises concerning gravity waves and the eigenvalues of the Laplacian at the surface, as highlighted in \cite{fontelos2020gravity,kim2015capillary}; the eigenvalues of the problem with sidewalls behave as the square root of the eigenvalues of the problem without walls. Furthermore, the non-local nature of $\mathcal{A}$ serves as an additional motivation for the current study. Finally, it should be noted that, although the problem is posed in the half-plane, in the two-dimensional case it is easy to study the problem in any simply connected domain, through a conformal mapping, which is easily traced in the operator $\mathcal{A}$ (see \cite{fontelos2023controllability}).

Our model offers several noteworthy advantages, each rooted in careful considerations: it is a linearization of the general water-waves system, tailored specifically for scenarios where a rigid surface interfaces with the free surface. Consequently, adaptations to describe the interaction between these surfaces become necessary. The flexibility of this model allows for the accommodation of diverse two-dimensional geometries, enabled by conformal mappings between the half-plane and a simply connected domain. This adaptability proves invaluable when addressing various oceanographic problems, especially those involving different channel bottom configurations.

The rest of the paper is organized as follows. In Section \ref{s2}, we give a brief description of how the model is a one dimensional representation of the water-waves system. Section 3 is devoted to proving the well-posedness of the system \eqref{I1}.
The unique continuation property and the controllability result (Proposition \ref{I_cp} and
Theorem \ref{I_ac}, respectively) are proved in Section \ref{sec:4}.

\section{Setting of the problem}\label{s2}

In this section, we will briefly describe how the model \eqref{I1} can be interpreted as a representation of a physical system for gravity waves.

Let us consider an incompressible, non-viscous fluid inside a two-dimensional container. Let $\mathbb{R}^2_{-}=\{(x,y)\in\mathbb{R}^2:y<0\}$ be the lower half plane in two dimensions, representing the domain of the fluid. We assume side walls enclose the fluid on $y=0$ and for $x\in(-1,1)$, so there is a free boundary that obeys the conservation laws (see Figure \ref{F1}). Let $\mathbf{u}$ be the velocity field of the flow and $p$ be the hydrostatic pressure. Mathematically, on $\mathbb{R}^2_{-}$ and for the unknown $(\mathbf{u},p)$, we are in the context of Euler's equations with zero surface tension:
\begin{align*}
	\nabla\cdot\mathbf{u}&=0, \\
	\frac{\partial\mathbf{u}}{\partial t}+(\mathbf{u}\cdot\nabla)
	\mathbf{u}&=-\frac{1}{\rho}\nabla p-ge_2,
\end{align*}
where $g>0$, $-ge_2$ is the constant acceleration of gravity, $e_2$ is the unit upward vector in the vertical direction, and $\rho$ is the (constant) density of the fluid. By considering the potential function $\Phi$ of $\mathbf{u}$, so that $\mathbf{u}=\nabla\Phi$, the last system can be written as
\begin{align}  \label{pf2}
	\Delta\Phi&=0, \\
	\frac{\partial\Phi}{\partial t}+\frac{1}{2}|\nabla\Phi|^2+\frac{1}{%
		\rho }p+gy&=\text{const}.
\end{align}
This system is complemented with an impermeability condition on the walls and a kinematic condition on the free boundary (the dashed line in Figure \ref{F1}), in terms of $\Phi$ given by
\begin{align}
	\frac{\partial\Phi}{\partial n}& =0,\quad y=0, \ |x|>1 \label{pf3} \\
	\eta _{t}& =\frac{\partial\Phi}{\partial n},\quad |x|<1,  \label{pf4}
\end{align}
where $\eta(t,x)$ is the parametrization of the free surface. Note that the presence of walls laterally bounding the fluid is represented in equation (\ref{pf3}). 
Moreover, let us linearize the system and the domain as follows. Assume the domain satisfies the condition $\eta(t,x)=\epsilon\zeta(t,x)$, so that $\Phi(t,x,y)=const.+\epsilon\varphi(t,x,y)$. Then, at the first order for $\epsilon<<1$, and after re-scaling to make $\rho=g=1$, we obtain the following linearized version of \eqref{pf2}--\eqref{pf4}:
\begin{align}
	\Delta\varphi & =0, \quad (x,y)\in\mathbb{R}^2_{-},  \label{pf5} \\
	\varphi_{t}+\zeta & =0,\quad \text{at }y=0,\ |x|<1,  \label{pf6} \\
	\zeta_{t}& =\frac{\partial\varphi}{\partial n},\quad \text{at }y=0,\ |x|<1,  \label{pf7} \\
	\frac{\partial\varphi}{\partial n}&=0,\quad \text{at }y=0, \ |x|>1.
	\label{pf8}
\end{align}

\begin{figure}[tbp]
	\begin{center}
		\includegraphics[scale=1]{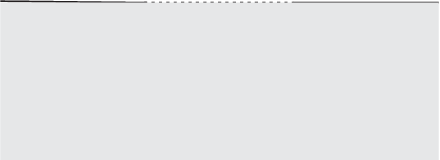}
		\put(-72,80){$1$}
		\put(-155,80){$-1$}
		\put(-125,35){$\Delta\varphi=0$}
	\end{center}
	\caption{The reference domain of the fluid enclosed by side walls.}
	\label{F1}
\end{figure}

The last system is complemented with initial conditions $\varphi_0$ and $\zeta_0$, together with the following conditions for the asymptotic behavior at infinity:
\begin{equation*}
	\partial_x\varphi, \partial_y\varphi\longrightarrow 0, \quad \text{as } y\longrightarrow-\infty \ \text{or } |x|\longrightarrow\infty.
\end{equation*}

\subsection{A non-local, integro-differential formulation of the Laplace equation}

The water-waves system above can be simplified in the following way to obtain an explicit formulation of the Laplace equation. This will help to understand the main difficulties of the problem and to formulate the approximate controllability problem in an abstract framework.

First, from \eqref{pf5}-\eqref{pf8}, one obtains the system \eqref{I1} with $\phi(t,x)\equiv\varphi(t,x,0)$ for $|x|<1$. Indeed, notice that from (\ref{pf6})-(\ref{pf7}), on the free boundary one has
\begin{equation}
	\varphi _{tt}+\frac{\partial\varphi}{\partial n}=0, \quad |x|<1.  \label{pf9}
\end{equation}
As is explained in \cite{fontelos2023controllability}, by solving the Laplace equation for $\left.\varphi_y\right|_{y=0}$, in the half-plane, we obtain
\begin{equation}\label{ac2_1}
	\left.\varphi_x(t,x,y)\right|_{y=0}=-\frac{1}{\pi }P.V.\int_{-\infty
	}^{\infty }\frac{\varphi_y(t,\xi,0)}{x-\xi }d\xi=H(\varphi_y),
\end{equation}
where $H$ is the Hilbert transform in $\mathbb{R}$ (see \cite[Chapter 5]{hochstadt2011integral}). Since the domain was flattened, we get $\displaystyle\left.\varphi_y\right|_{y=0}=\left.\frac{\partial\varphi}{\partial n}\right|_{y=0}$, and then by using (\ref{pf8}) it follows that
\begin{equation}\label{pf9_1}
	\left.\varphi_x(t,x,y)\right|_{y=0}=-\frac{1}{\pi }P.V.\int_{-1
	}^{1}\frac{\partial_n\varphi(t,\xi,0)}{x-\xi }d\xi,
\end{equation}
which is called the airfoil equation (see \cite[Chapter 5]{hochstadt2011integral}).

On the other hand, from the divergence theorem and (\ref{pf8}), the following mass conservation property 
\begin{equation}\label{mc2}
	0=\int_{\mathbb{R}_-^2}\Delta\varphi=\int_{\mathbb{R}}\frac{\partial\varphi}{\partial n}(x,0)dx=\int_{-1}^1\varphi_y(x,0)dx,
\end{equation}
holds. 

Formally computation presented in \cite[Section 3]{fontelos2023controllability}, achieved 

\begin{equation}\label{pf10_2}
	\partial_n\varphi(x)=\frac{1}{\pi}\frac{1}{\sqrt{1-x^2}}P.V.\int_{-1}^{1}\frac{\sqrt{1-\xi^2}}{x-\xi}\phi_x(\xi)d\xi,
\end{equation}

where $\phi(t,x)\equiv\varphi(t,x,0)$ for $|x|<1$ is the trace restricted to the free boundary.

\subsection{Equation analysis through an orthogonal basis}

To complement equation (\ref{pf10_2}) and establish the minimum assumptions that allow us to define the operator $\mathcal{A}$, we will consider the orthogonal system of Tchebyshev polynomials, which, thanks to the weight $w$, allows to decompose in a simple way the principal value of the integral.

Let us consider the Tchebyshev polynomials $T_n(x)$, $U_n(x)$, with $n\in\mathbb{N}_{0}:=\mathbb{N}\cup\{0\}$
of the first and second kind, respectively. Let us recall that they are defined as the polynomial solutions of the equations (see \cite{abramowitz1972} for details)
\begin{align*}
	T_n(\cos\theta)&=\cos(n\theta), \\
	U_n(\cos\theta)&=\frac{\sin((n+1)\theta)}{\sin\theta},
\end{align*}
$\theta\in[0,\pi]$. They satisfy the orthogonality relations in $L^2(-1,1)$, with the weighted inner products 
\begin{equation*}
	(f(x),g(x))_{w^{-1}}:=\int_{-1}^1\frac{f(x)g(x)}{\sqrt{1-x^2}}dx, \quad (f(x),g(x))_{w}:=\int_{-1}^1f(x)g(x)\sqrt{1-x^2}dx,
\end{equation*}
in correspondence with $\|\cdot\|_{w^{-1}}$, $\|\cdot\|_w$, in such a way that
\begin{equation}\label{oT}
	(T_{n}(x),T_{m}(x))_{w^{-1}}=\int_{-1}^1T_n(x)T_m(x)\frac{dx}{\sqrt{1-x^2}}=
	\begin{cases}
		0, \ \text{if }n\ne m, \\
		\pi, \ \text{if }n=m=0, \\
		\frac{\pi}{2}, \ \text{if }n=m\ne 0,%
	\end{cases}%
\end{equation}
\begin{equation}\label{oU}
	(U_n(x),U_m(x))_{w}=\int_{-1}^1U_n(x)U_m(x)\sqrt{1-x^2}dx=
	\begin{cases}
		0, \ \text{if }n\ne m, \\
		\frac{\pi}{2}, \ \text{if }n=m.%
	\end{cases}
\end{equation}
Moreover, for $n\ge1$, we also note that $T_{n}$ and $U_{n}$ 
verify the identities
\begin{align}
	\frac{1}{\pi}P.V.\int_{-1}^{1}\frac{T_{n}(\xi)}{\sqrt{1-\xi^{2}}(x-\xi)}d\xi&=-U_{n-1}(x),   \notag\\
	\frac{1}{\pi}P.V.\int_{-1}^{1}\sqrt{1-\xi^{2}}\frac{U_{n-1}(\xi)}{x-\xi}d\xi&=T_{n}(x),  \label{a-3}\\
	\frac{d}{dx}\left(\sqrt{1-x^2}U_{n-1}(x)\right)&=-n\frac{T_{n}(x)}{\sqrt{1-x^2}},  \notag\\
	\frac{d}{dx}T(x)&=U_{n-1}(x).\notag
\end{align}
\begin{remark}
	It is noteworthy that the sets $\{T_{n}\mid n\in \mathbb{N}_{0}\}$ and $\{U_{n}\mid n\in \mathbb{N}_{0} \}$ are basis of the spaces $L^{2}_{w^{-1}}$ and $L^{2}_{w}$, respectively.
\end{remark}
\begin{lemma}\label{rl10}
	Let $\phi\in H^{1/2}_{w^{-1}}(-1,1)$. Then, the following relation holds
	\begin{align}
		\frac{1}{\sqrt{1-x^2}}\frac{1}{\pi }P.V.\int_{-1}^{1}\sqrt{1-\xi ^{2}}%
		\frac{\phi_\xi(\xi)}{x-\xi }d\xi
		=\partial_x\left( \sqrt{1-x^2}\frac{1}{\pi }%
		P.V.\int_{-1}^{1}\frac{1}{\sqrt{1-\xi ^{2}}}\frac{\phi(\xi)%
		}{x-\xi }d\xi \right) .  \label{ex3}
	\end{align}
\end{lemma}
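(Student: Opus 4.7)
The plan is to verify the identity on the Tchebyshev basis and then extend by linearity and density. Since $\{T_n\mid n\in\mathbb{N}_0\}$ is an orthogonal basis of $L^2_{w^{-1}}$, and $H^{1/2}_{w^{-1}}$ embeds continuously into $L^2_{w^{-1}}$ (up to the $w^{-1}$-part of the norm), any $\phi\in H^{1/2}_{w^{-1}}$ admits a convergent expansion $\phi(x)=\sum_{n\ge 0} a_n T_n(x)$. Differentiating termwise gives $\phi_x(x)=\sum_{n\ge 1} n a_n U_{n-1}(x)$, which one checks is well controlled by the $H^{1/2}_{w^{-1}}$-norm; this is the kind of regularity encoded in the definition \eqref{psi1}, since $\mathcal{A}$ will be shown to be diagonal in the Tchebyshev basis with eigenvalues growing like $n$.

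Next, I would verify the identity for a single $T_n$, $n\ge 1$ (the $n=0$ contribution is trivial, the integrals vanishing). For the right-hand side, the first identity in \eqref{a-3} gives
\begin{equation*}
\frac{1}{\pi}P.V.\int_{-1}^{1}\frac{T_n(\xi)}{\sqrt{1-\xi^2}(x-\xi)}\,d\xi=-U_{n-1}(x),
\end{equation*}
and then the third identity in \eqref{a-3} yields
\begin{equation*}
\partial_x\Bigl(\sqrt{1-x^2}\,(-U_{n-1}(x))\Bigr)=\frac{n\,T_n(x)}{\sqrt{1-x^2}}.
\end{equation*}
For the left-hand side, using $\partial_x T_n(x)=n U_{n-1}(x)$ together with the second identity in \eqref{a-3},
\begin{equation*}
\frac{1}{\sqrt{1-x^2}}\frac{1}{\pi}P.V.\int_{-1}^{1}\sqrt{1-\xi^2}\,\frac{n U_{n-1}(\xi)}{x-\xi}\,d\xi=\frac{n\,T_n(x)}{\sqrt{1-x^2}}.
\end{equation*}
Both sides agree, so the identity holds on every basis element.

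Finally, I would extend the identity to all $\phi\in H^{1/2}_{w^{-1}}$ by linearity applied to finite partial sums $\phi_N:=\sum_{n=0}^N a_n T_n$, for which the computation above is rigorous, and then pass to the limit. The passage to the limit requires showing that both the Hilbert-type operators and the differentiation commute with the sum in the appropriate topology; boundedness of the finite Hilbert transform between the weighted $L^2$ spaces (which is built into the identities in \eqref{a-3}) handles the integral operators, while the derivative on the right-hand side is controlled in the sense of distributions (and then promoted to a genuine identity a.e. by the computations on basis elements).

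The main technical obstacle is this closing density/continuity step: ensuring that both sides are continuous functionals of $\phi\in H^{1/2}_{w^{-1}}$ so that passing to the limit is legitimate. All other ingredients are essentially algebraic consequences of the Tchebyshev identities \eqref{a-3}.
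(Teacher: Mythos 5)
Your proof is correct and takes essentially the same approach as the paper: expand $\phi$ in the Tchebyshev basis and apply the identities \eqref{a-3} term by term, so that both sides of \eqref{ex3} reduce to $\sum_{n\ge 1} n a_n T_n(x)/\sqrt{1-x^2}$. The paper carries out this termwise computation directly on the series without the additional density/limit justification you sketch, which is a reasonable (if brief) extra layer of care rather than a divergence in method.
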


\begin{proof}
	Since $H^{1/2}_{w^{-1}}(-1,1)\subset L^{2}_{w^{-1}}(-1,1)$, we write
	\begin{equation}  
		\phi(x)=\sum_{n=0}^{\infty}a_{n}T_{n}(x).
	\end{equation}
	
	Then, by using the identities from \eqref{a-3}, we get for the right-hand side of \eqref{ex3}
	\begin{equation}  \label{aphi}
		\partial_x\left(\sqrt{1-x^2}\frac{1}{\pi}P.V.\int_{-1}^{1}\frac{\phi(\xi)}{ \sqrt{1-\xi^{2}}(x-\xi)}d\xi\right) =\sum_{n=1}^{\infty}na_{n} \frac{T_{n}(x)}{\sqrt{1-x^2}}.
	\end{equation}
	
	On the other hand, we note that
	\[ \phi_x(x)=\sum_{n=0}^{\infty}a_{n}T'_{n}(x)=\sum_{n=1}^{\infty}na_{n}U_{n-1}(x), \]
	which, from the identities in (\ref{a-3}), implies that the left-hand side of \eqref{ex3} can be rewritten as
	\begin{align*}
		\frac{1}{\pi\sqrt{1-x^2}}P.V.\int_{-1}^{1}\sqrt{1-\xi ^{2}}
		\frac{\phi_\xi(\xi)}{x-\xi }d\xi&=\sum_{n=1}^{\infty}na_{n}\frac{1}{\pi\sqrt{1-x^2}}P.V.\int_{-1}^{1}\sqrt{1-\xi^{2}}\frac{U_{n-1}(\xi)}{x-\xi}d\xi \\
		&=\sum_{n=1}^{\infty}na_{n} \frac{T_{n}(x)}{\sqrt{1-x^2}},
	\end{align*}
	and the proof follows.
\end{proof}
In summary, combining \eqref{pf9}, \eqref{pf10_2} and Lemma \ref{rl10}, we have the following non-local, linear gravity waves equation in two-dimensional connected domains, as the non-local Cauchy problem,
\begin{equation}\label{ac3_1}
	\begin{cases}
		\phi_{tt}+\mathcal{A}\phi=0, \quad &(t,x)\in(0,\infty)\times(-1,1), \\
		\phi(0,x)=\phi_0(x), &x\in(-1,1), \\
		\phi_t(0,x)=\phi_1(x), &x\in(-1,1),
	\end{cases}
\end{equation}
where $\displaystyle\mathcal{A}\phi(t,x):=\partial_x\left(\frac{\sqrt{1-x^2}}{\pi}P.V.\int_{-1}^{1}\frac{\phi(t,\xi)}{\sqrt{1-\xi^2}(x-\xi)}d\xi\right)$. Therefore, our main task is to prove that system (\ref{ac3_11}) is approximately controllable, as stated in Theorem \ref{I_ac}; which means that for every initial data $[\phi_0,\phi_1]$ in $H_{w^{-1}}^{1/2}\times L^{2}$, the set of reachable states $R(T;[\phi_0,\phi_1])=\left\{[\phi(T),\phi_t(T)]: v\in L^2(0,T;L_w^2)\right\}$ is dense in $H_{w^{-1}}^{1/2}\times L^2$.

\section{Well-posedness of the Cauchy problem}\label{s3}
This section serves two primary purposes. Firstly, it provides a concise overview of the well-posedness of a semi-linear system related to \eqref{ac3_1}. Readers are encouraged to refer to the detailed discussion in \cite{fontelos2023controllability} for an in-depth and comprehensive examination. Secondly, establish the necessary framework to prove the main results presented in the introduction section.

\subsection{Properties of the operator $\mathcal{A}$}\label{sec:A:operator}
In this subsection, we explore the fundamental properties of the operator $\mathcal{A}$ within the Hilbert space $H_{w^{-1}}^{1/2}(-1,1)$ with the norm given in \eqref{psi1}.
\begin{lemma}\label{lem:adjoint} Let $\phi,\psi\in H_{w^{-1}}^{1/2}(-1,1)$. Then $$(\mathcal{A}\phi,\psi)_{L^{2}}=(\phi,\mathcal{A}\psi)_{L^{2}},$$
	i.e., $\mathcal{A}$ is self-adjoint in $H_{w^{-1}}^{1/2}(-1,1)$, for the inner product in $L^2$.
\end{lemma}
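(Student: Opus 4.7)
The plan is to prove self-adjointness via the Chebyshev spectral representation that has already been set up in the excerpt, which makes the bilinear form manifestly symmetric.

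First, I would expand both $\phi$ and $\psi$ in the orthonormal basis $\{T_n\}_{n\in\mathbb{N}_0}$ of $L^2_{w^{-1}}(-1,1)$, writing
\[
\phi(x)=\sum_{n=0}^{\infty} a_n T_n(x),\qquad \psi(x)=\sum_{n=0}^{\infty} b_n T_n(x).
\]
Because $\phi,\psi\in H^{1/2}_{w^{-1}}(-1,1)$, the computation leading to \eqref{aphi} in the proof of Lemma \ref{rl10} applies and gives the closed-form expressions
\[
\mathcal{A}\phi(x)=\sum_{n=1}^{\infty} n a_n\,\frac{T_n(x)}{\sqrt{1-x^2}},\qquad \mathcal{A}\psi(x)=\sum_{n=1}^{\infty} n b_n\,\frac{T_n(x)}{\sqrt{1-x^2}}.
\]

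Next, I would compute $(\mathcal{A}\phi,\psi)_{L^2}$ by pairing the series for $\mathcal{A}\phi$ against the series for $\psi$. Because of the factor $1/\sqrt{1-x^2}$ in $\mathcal{A}\phi$, the $L^2$-pairing reduces to a $w^{-1}$-weighted pairing of Chebyshev polynomials of the first kind, and the orthogonality relation \eqref{oT} collapses the double sum to a single sum, yielding
\[
(\mathcal{A}\phi,\psi)_{L^2}=\sum_{n=1}^{\infty} n a_n b_n \int_{-1}^1 \frac{T_n(x)^2}{\sqrt{1-x^2}}\,dx=\frac{\pi}{2}\sum_{n=1}^{\infty} n\,a_n b_n.
\]
This expression is visibly symmetric in $(a_n)$ and $(b_n)$, so the same calculation with the roles of $\phi$ and $\psi$ reversed produces the identical number, establishing $(\mathcal{A}\phi,\psi)_{L^2}=(\phi,\mathcal{A}\psi)_{L^2}$.

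The main obstacle, and the only non-formal point, is justifying the termwise integration and the absolute convergence of $\sum n a_n b_n$. This is where the hypothesis $\phi,\psi\in H^{1/2}_{w^{-1}}$ is used essentially: by the definition \eqref{psi1} of the $H^{1/2}_{w^{-1}}$-norm, applied to the closed-form expression above with $\phi=\psi$, the quantity $(\mathcal{A}\phi,\phi)_{L^2}=\tfrac{\pi}{2}\sum_{n\geq 1} n a_n^2$ is finite, and likewise $\sum_{n\geq 1} n b_n^2<\infty$. Cauchy--Schwarz then gives $\sum_{n\geq 1} n |a_n b_n|<\infty$, which both legitimizes the rearrangement of the integral and shows the pairing is well defined. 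Once this is in place, the symmetry of the final sum closes the proof.
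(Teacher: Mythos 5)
Your proposal is correct and follows essentially the same route as the paper's own proof: expand $\phi$ and $\psi$ in the Chebyshev basis $\{T_n\}$, use the closed form $\mathcal{A}\phi=\sum_{n\ge 1} n a_n T_n/\sqrt{1-x^2}$ from \eqref{aphi}, and reduce $(\mathcal{A}\phi,\psi)_{L^2}$ via the orthogonality relations \eqref{oT} to the symmetric sum $\tfrac{\pi}{2}\sum_{n\ge1} n a_n b_n$. Your added justification of the termwise integration via Cauchy--Schwarz on $\sum n|a_nb_n|$ is a point the paper leaves implicit, and it is a welcome refinement.
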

\begin{proof}
	Proceeding as in the proof of Lemma \ref{rl10}, given
	\begin{equation}  \label{a-1.2}
		\phi(x)=\sum_{n=0}^{\infty}a_{n}T_{n}(x), \quad \psi(x)=\sum_{n=0}^\infty b_nT_n(x),
	\end{equation}
	we note that from \eqref{aphi} we have $\displaystyle \mathcal{A}\phi(x)= \sum_{n=1}^{\infty}na_{n} \frac{T_{n}(x)}{\sqrt{1-x^2}}$. Then, using the orthogonality relations from \eqref{oT}, we get
	\begin{equation}  \label{a1}
		(\mathcal{A}\phi,\psi)_{L^{2}}=\int_{-1}^{1}
		\left(\sum_{n=1}^{\infty}na_{n}\frac{T_{n}(x)}{\sqrt{1-x^2}}\right)\left(
		\sum_{m=0}^{\infty}b_{m}T_{m}(x)\right)dx=\frac{\pi}{2}\sum_{n=1}^{\infty }na_{n}b_{n}.
	\end{equation}
	The roles of $\psi$ and $\phi$ in the above computations can be interchanged hence the result follows.    
\end{proof}

The following Lemma, our version of the Poincar\'e inequality, implies the coerciveness of $\mathcal{A}$.
\begin{lemma}
	Let $\phi\in H_{w^{-1}}^{1/2}(-1,1)$. Then
	\begin{equation}\label{ine:A}
		\|\phi\|_{w^{-1}}^2\leq\frac{1}{4}\left(\pi^2-4\right)(\mathcal{A}\phi,\phi)_{L^{2}}+2\pi|\Bar{\phi}|^2,
	\end{equation}
	where  $\displaystyle\Bar{\phi}:=\frac{1}{2}\int_{-1}^1\phi(x)dx$.
\end{lemma}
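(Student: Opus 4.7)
The natural strategy is to reduce the inequality to a statement about the Chebyshev Fourier coefficients of $\phi$, since both $\|\phi\|_{w^{-1}}^2$ and $(\mathcal{A}\phi,\phi)_{L^2}$ diagonalize in that basis. Writing $\phi=\sum_{n\ge 0}a_n T_n$, the orthogonality relations \eqref{oT} give $\|\phi\|_{w^{-1}}^2=\pi a_0^2+\tfrac{\pi}{2}\sum_{n\ge 1}a_n^2$, while the computation already performed in the proof of Lemma~\ref{lem:adjoint} (see \eqref{aphi}--\eqref{a1}) gives $(\mathcal{A}\phi,\phi)_{L^2}=\tfrac{\pi}{2}\sum_{n\ge 1}n\,a_n^2$. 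Since $n\ge 1$ in the second sum, the elementary Poincar\'e-type bound $\sum_{n\ge 1}a_n^2\le\sum_{n\ge 1}na_n^2$ is immediate and yields
\[
\|\phi\|_{w^{-1}}^2\le\pi a_0^2+(\mathcal{A}\phi,\phi)_{L^2}.
\]
The entire task is then to dominate the zero-mode $\pi a_0^2$ by $2\pi|\bar\phi|^2$ plus an admissible multiple of $(\mathcal{A}\phi,\phi)_{L^2}$.

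To express $\bar\phi$ in the Chebyshev basis I would change variables $x=\cos\theta$ and compute $\int_{-1}^1 T_n(x)\,dx$: the integral vanishes for odd $n$, equals $2$ for $n=0$, and equals $-2/(4k^2-1)$ for $n=2k$ with $k\ge 1$. Hence
\[
\bar\phi=a_0-S,\qquad S:=\sum_{k\ge 1}\frac{a_{2k}}{4k^2-1}.
\]
Young's inequality applied to the splitting $a_0=\bar\phi+S$ gives $\pi a_0^2\le 2\pi|\bar\phi|^2+2\pi S^2$, and Cauchy--Schwarz combined with the trivial $\sum_{k\ge 1}a_{2k}^2\le\sum_{n\ge 1}a_n^2\le\sum_{n\ge 1}na_n^2=\tfrac{2}{\pi}(\mathcal{A}\phi,\phi)_{L^2}$ yields
\[
S^2\le\Bigl(\sum_{k\ge 1}\tfrac{1}{(4k^2-1)^2}\Bigr)\sum_{k\ge 1}a_{2k}^2\le\Bigl(\sum_{k\ge 1}\tfrac{1}{(4k^2-1)^2}\Bigr)\frac{2}{\pi}(\mathcal{A}\phi,\phi)_{L^2}.
\]

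The key quantitative input is the closed form
\[
\sum_{k\ge 1}\frac{1}{(4k^2-1)^2}=\frac{\pi^2-8}{16},
\]
which I would obtain from the partial-fraction identity $\tfrac{1}{(2k-1)^2(2k+1)^2}=\tfrac14\bigl(\tfrac{1}{(2k-1)^2}+\tfrac{1}{(2k+1)^2}\bigr)-\tfrac{1}{2(2k-1)(2k+1)}$, together with Euler's formula $\sum_{j\ge 0}(2j+1)^{-2}=\pi^2/8$ and the telescoping sum $\sum_{k\ge 1}(\tfrac{1}{2k-1}-\tfrac{1}{2k+1})=1$. Plugging this value in yields $2\pi S^2\le \tfrac{\pi^2-8}{4}(\mathcal{A}\phi,\phi)_{L^2}$, and combining with the two earlier estimates produces
\[
\|\phi\|_{w^{-1}}^2\le 2\pi|\bar\phi|^2+\Bigl(1+\frac{\pi^2-8}{4}\Bigr)(\mathcal{A}\phi,\phi)_{L^2}=2\pi|\bar\phi|^2+\frac{\pi^2-4}{4}(\mathcal{A}\phi,\phi)_{L^2},
\]
which is precisely \eqref{ine:A}. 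The only delicate step is the evaluation of $\sum(4k^2-1)^{-2}$: this specific value is exactly what converts the ``$1\cdot(\mathcal{A}\phi,\phi)_{L^2}$'' of the first Poincar\'e bound into the stated coefficient $(\pi^2-4)/4$, so without the Euler identity the constant would remain opaque; every other ingredient is orthogonality, Young's inequality, and Cauchy--Schwarz.
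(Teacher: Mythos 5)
Your proposal is correct and follows essentially the same route as the paper: expand in the Chebyshev basis $\{T_n\}$, use that both $\|\phi\|_{w^{-1}}^2$ and $(\mathcal{A}\phi,\phi)_{L^2}$ diagonalize there, isolate the zero mode $a_0$ via $\bar\phi$ together with Cauchy--Schwarz, and finish with $\sum_{n\ge1}a_n^2\le\sum_{n\ge1}na_n^2$. The only divergence is in how the constant $\sum_{n\ge1}\bigl(\int_{-1}^1T_n(x)\,dx\bigr)^2=\tfrac{\pi^2-8}{4}$ is obtained: the paper extracts it from Parseval's identity applied to the expansion of $\sqrt{1-x^2}$ in the $T_n$, whereas you compute the integrals explicitly and evaluate $\sum_{k\ge1}(4k^2-1)^{-2}$ via partial fractions and Euler's series --- both computations give the same value, so the two arguments coincide in substance.
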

\begin{proof}
	We write 
	\begin{equation}\label{eq:expansion}
		\phi(x):=\sum_{n=0}^{\infty}a_{n}T_{n}(x).
	\end{equation}
	Then, integrating over the interval $(-1,1)$ we have
	\begin{equation*}
		a_0\int_{-1}^1T_0(x)dx+\sum_{n=1}^{\infty}a_n\int_{-1}^1T_n(x)dx=2\Bar{\phi},
	\end{equation*}
	Since $\displaystyle\int_{-1}^1T_0(x)dx=2$, thanks to
	the Young and Cauchy-Schwarz inequalities, we get
	\begin{equation}\label{jc10}
		a_{0}^{2}\le \frac{1}{2}\sum_{n=1}^{\infty}\left(\int_{-1}^1T_n(x)dx\right)^2\sum_{n=1}^{\infty}a_n^2+2|\Bar{\phi}|^2.
	\end{equation}
	
	On the other hand, we notice that the following expansion holds
	\begin{equation}\label{eq:square:T}
		\sqrt{1-x^2}=\sum_{n=0}^{\infty}c_{n}T_{n}(x),
	\end{equation}
	since  $\sqrt{1-x^{2}}\in L^{2}_{w^{-1}}(-1,1)$. Moreover, by using the orthogonality properties \eqref{oT}, the coefficients $\{c_{n}\}_{n=0}^{\infty}$ can be computed as
	\begin{equation*}
		c_0=\frac{1}{\pi}\int_{-1}^1T_0(x)dx=\frac{2}{\pi}, \quad c_n=\frac{2}{\pi}\int_{-1}^1T_n(x)dx.
	\end{equation*}
	Then, using \eqref{eq:square:T} and \eqref{oT} we can compute 
	\begin{equation*}
		\frac{\pi}{2}=\int_{-1}^1(1-x^2)\frac{1}{\sqrt{1-x^2}}dx=\int_{-1}^1\sum_{n=0}^{\infty}c_{n}T_{n}\sum_{m=0}^{\infty}c_{m}T_{m}\frac{1}{\sqrt{1-x^2}}dx=\pi c_0^2+\frac{\pi}{2}\sum_{n=1}^{\infty}c_n^2.
	\end{equation*}
	Thus, $\displaystyle\sum_{n=1}^{\infty}\left(\int_{-1}^1T_n(x)dx\right)^2=\frac{\pi^2}{4}\sum_{n=1}^{\infty}c_n^2=\frac{\pi^2-8}{4}$. Therefore, by replacing the last expression in (\ref{jc10}), the coefficients $\{a_{n}\}_{n\geq 0} $ verify
	\begin{equation}\label{a2}
		a_{0}^{2}\leq \frac{\pi^2-8}{8}\sum_{n=1}^{\infty}a_{n}^{2}+2|\Bar{\phi}|^2.
	\end{equation}
	Finally, using the expansion \eqref{eq:expansion} we have
	\begin{align*}
		\|\phi\|_{w^{-1}}^2&=\int_{-1}^1\frac{|\phi|^2}{\sqrt{1-x^2}}dx=\pi a_0^2+\frac{\pi}{2}\sum_{n=1}^{\infty}a_n^2, 
	\end{align*}
	then using \eqref{a2} it follows that
	\begin{align*}
		\|\phi\|_{w^{-1}}^2  &\le \frac{\pi}{8}\left(\pi^2-4\right)\sum_{n=1}^{\infty}a_n^2+2\pi|\Bar{\phi}|^2 \\
		&\le \frac{\pi}{8}\left(\pi^2-4\right)\sum_{n=1}^{\infty}na_n^2+2\pi|\Bar{\phi}|^2. 
	\end{align*}
	Hence, using equation \eqref{a1} on the first expression of the right-hand side above, the inequality becomes
	\begin{align*}
		\|\phi\|_{w^{-1}}^2  &\leq\frac{1}{4}\left(\pi^2-4\right)(\mathcal{A}\phi,\phi)_{L^{2}}+2\pi|\Bar{\phi}|^2,
	\end{align*}
	and the proof is complete.
\end{proof}
\subsection{Existence of weak solutions}
In the exploration of weak solutions for the system \eqref{ac3_1}, we focus on addressing the following problem
\begin{equation}  \label{phiu}
	\phi+\mathcal{A}\phi=v,
\end{equation}
with $v$ belonging to $L^2_w$. This problem is accompanied by a weak formulation
\begin{equation}\label{weak1}
	(\phi,\psi)_{L^2}+(\mathcal{A}\phi,\psi)_{L^{2}}=(v,\psi)_{L^{2}},
\end{equation}
to be satisfied for any $\psi$ in $H^{1/2}_{w^{-1}}(-1,1)$.

As a direct consequence of the previous lemmas, we have the existence of a unique solution for the weak formulation in $H_{w^{-1}}^{1/2}$.

\begin{lemma}\label{lem:weak:existence}
	Given $v\in L^2_{w}$, there exists a unique weak solution $\phi\in H_{w^{-1}}^{1/2}$ such that (\ref{phiu}) holds.
\end{lemma}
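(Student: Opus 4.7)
The plan is to apply the Lax--Milgram theorem (or equivalently, since the bilinear form will be symmetric, the Riesz representation theorem) to the bilinear form
\begin{equation*}
a(\phi,\psi) := (\phi,\psi)_{L^2} + (\mathcal{A}\phi,\psi)_{L^2}
\end{equation*}
on the Hilbert space $H^{1/2}_{w^{-1}}(-1,1)$ and the linear functional $\psi\mapsto (v,\psi)_{L^2}$. I therefore need to check three things: (i) continuity of $a(\cdot,\cdot)$, (ii) coercivity of $a(\cdot,\cdot)$ with respect to the norm $\|\cdot\|_{H^{1/2}_{w^{-1}}}$, and (iii) continuity of the right-hand side functional. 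Symmetry of $a$ follows immediately from the self-adjointness established in Lemma \ref{lem:adjoint}, so uniqueness will follow once coercivity is secured.

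For continuity, I would note first the elementary pointwise bound $1\leq 1/\sqrt{1-x^2}$ on $(-1,1)$, which gives $\|\phi\|_{L^2}\leq \|\phi\|_{w^{-1}}\leq \|\phi\|_{H^{1/2}_{w^{-1}}}$; hence $|(\phi,\psi)_{L^2}|\leq\|\phi\|_{H^{1/2}_{w^{-1}}}\|\psi\|_{H^{1/2}_{w^{-1}}}$. For the second term, expanding $\phi=\sum a_nT_n$, $\psi=\sum b_nT_n$ and invoking identity \eqref{a1} gives $(\mathcal{A}\phi,\psi)_{L^2}=\frac{\pi}{2}\sum_{n\geq 1}na_nb_n$, and Cauchy--Schwarz together with $(\mathcal{A}\phi,\phi)_{L^2}\leq\|\phi\|^2_{H^{1/2}_{w^{-1}}}$ yields the desired bound. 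For the right-hand side, the weighted Cauchy--Schwarz inequality
\begin{equation*}
|(v,\psi)_{L^2}|=\left|\int_{-1}^{1}(\sqrt{w}\,v)\,(\psi/\sqrt{w})\,dx\right|\leq \|v\|_{L^2_w}\|\psi\|_{w^{-1}}\leq \|v\|_{L^2_w}\|\psi\|_{H^{1/2}_{w^{-1}}}
\end{equation*}
shows the functional is continuous.

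The main step, and the one requiring the Poincar\'e-type estimate \eqref{ine:A}, is coercivity. Writing $\bar\phi=\frac{1}{2}\int_{-1}^{1}\phi\,dx$, Cauchy--Schwarz on $(-1,1)$ gives $|\bar\phi|^2\leq \frac{1}{2}\|\phi\|_{L^2}^2$, so inequality \eqref{ine:A} becomes
\begin{equation*}
\|\phi\|_{w^{-1}}^2\leq \tfrac{1}{4}(\pi^2-4)(\mathcal{A}\phi,\phi)_{L^2}+\pi\|\phi\|_{L^2}^2.
\end{equation*}
Adding $(\mathcal{A}\phi,\phi)_{L^2}$ to both sides, the left-hand side becomes exactly $\|\phi\|^2_{H^{1/2}_{w^{-1}}}$ by definition \eqref{psi1}, while the right-hand side is bounded by $\max\{\pi^2/4,\pi\}\,a(\phi,\phi) = \tfrac{\pi^2}{4}a(\phi,\phi)$. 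Hence
\begin{equation*}
\|\phi\|^2_{H^{1/2}_{w^{-1}}}\leq \tfrac{\pi^2}{4}\,a(\phi,\phi),
\end{equation*}
which is the required coercivity with constant $4/\pi^2$. With continuity, coercivity, and continuity of the functional all in hand, Lax--Milgram delivers a unique $\phi\in H^{1/2}_{w^{-1}}$ satisfying \eqref{weak1}.

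The only subtlety I foresee is the coercivity step, specifically the need to pass from the raw Poincar\'e-type bound \eqref{ine:A}, which contains the average $\bar\phi$, to a bound purely in terms of $a(\phi,\phi)$. This is handled cleanly by the elementary estimate $|\bar\phi|^2\leq \tfrac{1}{2}\|\phi\|_{L^2}^2$, so the entire argument should be a few lines once the setup is in place.
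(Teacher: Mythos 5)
Your proof follows essentially the same route as the paper: the same bilinear form $a(\phi,\psi)$, continuity via \eqref{a1} and Cauchy--Schwarz, coercivity obtained from \eqref{ine:A} combined with $|\bar\phi|^2\le\tfrac12\|\phi\|_{L^2}^2$, and then Lax--Milgram. The only blemish is the arithmetic slip $\max\{\pi^2/4,\pi\}=\pi^2/4$ (in fact $\pi^2/4<\pi$ since $\pi<4$), so the coercivity constant should be $1/\pi$ as in the paper rather than $4/\pi^2$; this does not affect the validity of the argument.
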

\begin{proof}
	Let us define the bilinear form
	\[ a(\phi,\psi) := (\phi,\psi)_{L^2} + (\mathcal{A}\phi,\psi)_{L^{2}}. \]
	The continuity of $a$ is a consequence of \eqref{a1}. Indeed, applying the Cauchy-Schwartz inequality on \eqref{a1}, we get
	\begin{equation*}
		(\mathcal{A}\psi,\phi)\leq (\mathcal{A}\psi,\psi)_{L^{2}}(\mathcal{A}\phi,\phi)_{L^{2}}.
	\end{equation*}
	The above inequality and the definition of the weight $w$ enables us to write
	\begin{equation*}
		|a(\phi,\psi)|\leq \left\| \psi\right\|_{H^{1/2}_{w^{-1}}}\left\| \phi\right\|_{H^{1/2}_{w^{-1}}}.
	\end{equation*}
	For the coercivity, we first observe that $|\overline{\phi}|^2 \le \|\phi\|_{L^2}^2/2$. Then, using the later inequality and \eqref{ine:A}, we obtain
	\begin{equation}\label{coercive}
		\|\phi\|_{L^2}^2 + (\mathcal{A}\phi,\phi)_{L^2} \ge \frac{1}{\pi}\|\phi\|_{w^{-1}}^2 + \frac{4+4\pi-\pi^2}{4\pi}(\mathcal{A}\phi,\phi)_{L^{2}} \ge \frac{1}{\pi}\left\Vert\phi\right\Vert_{H_{w^{-1}}^{1/2}}^2.
	\end{equation} 
	Therefore, the result follows from the Lax-Milgram Theorem.
\end{proof}

\begin{remark}\label{r1}
	It is worth mentioning here that we have the inclusion $H^{1/2}_{w^{-1}}\subset H^{1/2}$. Indeed, considering the orthonormal base $\{e_{n}\}_{n\geq 0}\subset H^{1/2}_{w^{-1}}$, see \cite[Equation 4.20]{fontelos2023controllability}, solution of 
	\begin{equation*}
		(\mathcal{A}e_{n},\phi)_{L^{2}}=\lambda_{n}(e_{n},\phi)_{L^{2}} \mbox{ for }\phi\in H^{1/2}_{w^{-1}},
	\end{equation*}
	and the equivalent definition of the fractional Sobolev spaces 
	\begin{equation*}
		H^{1/2}(-1,1):=\left\{\phi:\sum_{n}\lambda_{n}
		\left(\int_{-1}^{1}\frac{\phi e_{n}}{\sqrt{1-x^2}}dx\right)^{2}<\infty\right\},
	\end{equation*}
	\begin{equation*}
		H^{-1/2}(-1,1):=\left\{\phi:\sum_{n}\frac{1}{
			\lambda_{n}}\left(\int_{-1}^{1}\frac{\phi e_{n}}{\sqrt{1-x^2}}dx\right)^{2}<\infty\right\}.
	\end{equation*}
	
	Then, the following equivalence holds
	\begin{align*}
		\|\phi\|_{H^{1/2}}^2&=\sum_{n}\lambda_{n}\left(\int_{-1}^{1}\frac{\phi e_{n}}{\sqrt{1-x^2}}dx\right)^{2} \\
		&=\sum_{n}\left(\int_{-1}^{1}\frac{\phi\lambda_{n}e_{n}}{\sqrt{1-x^2}}dx\right)\left(\int_{-1}^{1}\frac{\phi e_{n}}{\sqrt{1-x^2}}dx\right) \\
		&=(\mathcal{A}\phi,\phi)_{L^{2}}.
	\end{align*}
	Thus, recalling the definition \eqref{psi1} we obtain
	\begin{align*}
		\|\phi\|_{H^{1/2}}^2&\le \|\phi\|_{H^{1/2}_{w^{-1}}}.
	\end{align*}
\end{remark}
The following result guarantees the well-posedness for the non-homogeneous system related to \eqref{I_ac}.
\begin{theorem}\label{ac3_th1}
	Let $T>0$, $[\phi_0,\phi_1]\in H_{w^{-1}}^{1/2}\times L^2$ and  $f:\mathbb{R}\longrightarrow \mathbb{R}$ Lipschitz. Then, the system
	\begin{equation}\label{non-hom}
		\begin{cases}
			\phi_{tt}+\mathcal{A}\phi=f(\phi), \quad &(t,x)\in(0,\infty)\times(-1,1), \\
			\phi(0,x)=\phi_0(x), &x\in(-1,1), \\
			\phi_t(0,x)=\phi_1(x), &x\in(-1,1),
		\end{cases}
	\end{equation}
	has a unique solution
	\begin{equation*}
		[\phi,\phi_t]\in C([0,T];H_{w^{-1}}^{1/2}\times L^2).
	\end{equation*}
\end{theorem}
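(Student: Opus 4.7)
The strategy is to rewrite \eqref{non-hom} as a first-order abstract Cauchy problem in the Hilbert space $\mathcal{H}:=H^{1/2}_{w^{-1}}\times L^2$, show that the linear part generates a $C_0$-group on $\mathcal{H}$, and then treat the Lipschitz nonlinearity via the contraction mapping principle applied to Duhamel's formula. Writing $U=(\phi,\phi_t)^\top$ and $U_0=(\phi_0,\phi_1)^\top$, we set
\[
\mathcal{B}\begin{pmatrix}\phi\\ \psi\end{pmatrix}:=\begin{pmatrix} \psi\\ -\mathcal{A}\phi\end{pmatrix},
\qquad
F\begin{pmatrix}\phi\\ \psi\end{pmatrix}:=\begin{pmatrix} 0\\ f(\phi)\end{pmatrix},
\]
so that \eqref{non-hom} becomes $U'(t)=\mathcal{B}U(t)+F(U(t))$, $U(0)=U_0$. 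On $\mathcal{H}$ we use the energy inner product
\[
\langle (\phi_1,\psi_1),(\phi_2,\psi_2)\rangle_\mathcal{H}:=(\phi_1,\phi_2)_{L^2}+(\mathcal{A}\phi_1,\phi_2)_{L^2}+(\psi_1,\psi_2)_{L^2},
\]
whose induced norm, thanks to definition \eqref{psi1} and to the coercivity estimate \eqref{coercive}, is equivalent to the natural norm on $H^{1/2}_{w^{-1}}\times L^2$.

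Using the self-adjointness and non-negativity of $\mathcal{A}$ from Lemma \ref{lem:adjoint} and the subsequent Poincaré-type inequality \eqref{ine:A}, one checks in a standard fashion that $\mathcal{B}$ and $-\mathcal{B}$ are both maximal dissipative with respect to this inner product, so that by Stone's theorem (or equivalently Lumer–Phillips) $\mathcal{B}$ generates a $C_0$-group $\{S(t)\}_{t\in\mathbb{R}}$ of bounded operators on $\mathcal{H}$. This recovers, in the present functional framework, the linear homogeneous well-posedness already discussed in \cite{fontelos2023controllability}.

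For the nonlinear problem we introduce the mild formulation
\[
U(t)=S(t)U_0+\int_0^t S(t-s)F(U(s))\,ds,
\]
and solve it by Banach's fixed-point theorem in $X_{T'}:=C([0,T'];\mathcal{H})$ for $T'>0$ small. The decisive point is the global Lipschitz continuity of $F:\mathcal{H}\to\mathcal{H}$. Indeed, by Remark \ref{r1} we have $H^{1/2}_{w^{-1}}\subset L^2_{w^{-1}}$, and since $\|\cdot\|_{L^2}\le\|\cdot\|_{L^2_{w^{-1}}}$ this yields $H^{1/2}_{w^{-1}}\hookrightarrow L^2$; together with the hypothesis $|f(x)-f(y)|\le c|x-y|$ this gives
\[
\|F(U_1)-F(U_2)\|_{\mathcal{H}}=\|f(\phi_1)-f(\phi_2)\|_{L^2}\le c\|\phi_1-\phi_2\|_{L^2}\le C\|U_1-U_2\|_{\mathcal{H}}.
\]
Choosing $T'$ so that $CT'<1$ makes the map $\Gamma(U)(t):=S(t)U_0+\int_0^t S(t-s)F(U(s))\,ds$ a strict contraction of $X_{T'}$, providing a unique local mild solution. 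Since the Lipschitz constant of $F$ is independent of the size of $U$, a standard Gronwall estimate on $\|U(t)\|_{\mathcal{H}}$ precludes finite-time blow-up and the local solution extends to a unique $U\in C([0,T];\mathcal{H})$ for every $T>0$.

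The step I expect to require the most care is the generation of the group $\{S(t)\}$, because $\mathcal{A}$ has a non-trivial kernel (the constants, since $\mathcal{A}T_0=0$ by \eqref{aphi}); consequently $(\mathcal{A}\cdot,\cdot)_{L^2}$ is only a seminorm on $H^{1/2}_{w^{-1}}$ and the definition \eqref{psi1} of the space must be paired with the zero-mode estimate \eqref{ine:A} to obtain a genuine norm equivalent to the graph norm of $\mathcal{B}$. Once this calibration is made, skew-adjointness of $\mathcal{B}$, the Hille–Yosida argument, and the ensuing Picard iteration are entirely routine semigroup-theoretic manipulations.
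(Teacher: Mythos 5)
Your proposal follows essentially the same route as the paper: rewrite \eqref{non-hom} as a first-order abstract Cauchy problem for $U=(\phi,\phi_t)$ in $H^{1/2}_{w^{-1}}\times L^2$, generate the linear group via ($m$-)dissipativity and Lumer--Phillips using the self-adjointness of $\mathcal{A}$ and the Lax--Milgram solvability of $u+\mathcal{A}u=h+g$, and then absorb the globally Lipschitz nonlinearity (the paper via an a priori energy/Gronwall estimate, you via Duhamel and Banach's fixed point --- standard and interchangeable here). Your remark that the kernel of $\mathcal{A}$ (the constants) forces one to include the $L^2$ term in the energy inner product to get a genuine norm is a point the paper's sketch glosses over, and is worth retaining.
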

\textbf{Sketch of the proof:}
We consider the Hilbert space $X:=H^{1/2}_{w^{-1}}\times L^2,$ with the scalar product 
\begin{equation*}
	((u,v),(w,z))_X:=(\mathcal{A}u,w)_{L^2}+(v,z)_{L^2}.
\end{equation*}
Defining the linear operator ${\bf A}:D({\bf A})\subset X\longrightarrow X$ given by ${\bf A}(u,v):=(v,-\mathcal{A}u)$, where
$D({\bf A}):=\left\{[u,v]\in X|\; {\bf A}(u,v) \in X \right\}$. We have that, with $\displaystyle U:=\begin{pmatrix} \phi\\ \phi_{t}\end{pmatrix}$, the homogeneous system associated to \eqref{non-hom} is equivalent to
\begin{equation}\label{system:wp}
	\begin{cases}
		U'(t)=\mathbf{A}U(t), \quad &(t,x)\in(0,\infty)\times(-1,1), \\
		U(0)=U_{0}, &x\in(-1,1),
	\end{cases}
\end{equation}
where $\displaystyle U_{0}:=\begin{pmatrix} \phi_{0}\\ \phi_{1}\end{pmatrix}$. We know that \eqref{system:wp} is well-posed if and only if the operator $\mathbf{A}$ is a generator of a contraction isometry group in $X$. Recalling the characterization given by \cite[Theorem 3.4.4]{CH:1990} we need to show that $\mathbf{A}$ is $m-$dissipative in $X$(see \cite[Definition 2.2.2]{CH:1990}). Thanks to remark \ref{r1} we notice $C_c^{\infty}([-1,1])\subset H^{1/2}_{w^{-1}}$, implying $D({\bf A})$ is dense in $X$. In turn, thanks to \cite[Proposition 2.4.2]{CH:1990}, the dissipative property is equivalent to prove $({\bf A}(u,v),(u,v))_X=0$ for all $[u,v]\in D({\bf A})$ which follows directly from the fact of  $\mathcal{A}$ being  self-adjoint (see Lemma \ref{lem:adjoint}). Indeed, we have
$$({\bf A}(u,v),(u,v))_X=((v,-\mathcal{A}u),(u,v))_X=(\mathcal{A}v,u)_{L^2}-(\mathcal{A}u,v)_{L^2}=0.$$
Therefore, ${\bf A}$ is dissipative. Now, to conclude that $\mathbf{A}$ es $m$-dissipative(see \cite[Proposition 2.2.6 ]{CH:1990}), we need to show that for any $[h,g]\in X$  there exists $[u,v]\in D(\textbf{A})$ such that 
\begin{equation}\label{eq:system}
	(u,v)-{\bf A}(u,v)=(h,g).
\end{equation}
To this end, for $[h,g]\in X$ given, we note that \eqref{eq:system} is equivalent to
$$
\left\{\begin{array}{l}
	u-v=h,\\
	v+\mathcal{A}u=g.
\end{array}\right.
$$
Thus, we have that $h+g\in L^2\subset L^2_{w}$. Then, by Lemma \ref{lem:weak:existence}, there exists a solution $u\in H^{1/2}_{w^{-1}}$, satisfying $u+\mathcal{A}u=h+g\in L^2$ implying that $v=u-h\in H^{1/2}_{w^{-1}}$. Therefore, we found $[u,v]\in D({\bf A})$ such that \eqref{eq:system} holds for all $[h,g]\in X$. In conclusion, ${\bf A}$ is $m$-dissipative in $X$. The proof of $-{\bf A}$ being $m$-dissipative is analogous. 

At this stage, we have the well-possedness of system \eqref{system:wp} for bounded time intervals. To obtain global existence of all solutions we need the dissipation of the following energy (see for instance \cite[Proposition 6.3.1]{CH:1990})
\begin{equation*}
	E(t)=\int_{-1}^1|\phi|^2 dx +\int_{-1}^1|\phi_t|^2dx+(\mathcal{A}\phi,\phi)_{L^2}.
\end{equation*}
However, by a density argument we can take the initial data in $D(\mathbf{A})$ to ensure that $\phi_{t}\in L^{2}$. In this case we have $\frac{d}{dt}|\phi|^2=\frac{1}{2}\phi\,\phi_t$, then
\begin{equation}\label{energy3}
	\frac{1}{2}\frac{d}{dt}\int_{-1}^1|\phi|^2dx\le \frac{1}{2}\int_{-1}^1|\phi|^2dx+\frac{1}{2}\int_{-1}^1|\phi_t|^2dx.
\end{equation}
On the other hand, multiplying the main equation of \eqref{non-hom} by $\phi_t$ and integrating over $x\in(-1,1)$ it follows that
\begin{equation*}
	\frac{1}{2}\frac{d}{dt}\int_{-1}^1|\phi_t|^2dx+(\mathcal{A}\phi,\phi_t)_{L^{2}}=\int_{-1}^1f(\phi)\phi_t dx.
\end{equation*}
Since $\mathcal{A}$ is self-adjoint,
\begin{equation*}
	\frac{1}{2}\frac{d}{dt}\int_{-1}^1|\phi_t|^2dx+\frac{1}{2}\frac{d}{dt}(\mathcal{A}\phi,\phi)_{L^{2}}=\int_{-1}^1f(\phi)\phi_t dx.
\end{equation*}
Now, by Young's inequality and the Lipschitz assumption on $f$,
\begin{equation}\label{energy6}
	\frac{1}{2}\frac{d}{dt}\int_{-1}^1|\phi_t|^2dx+\frac{1}{2}\frac{d}{dt}(\mathcal{A}\phi,\phi)_{L^{2}}\le\frac{c^2}{2}\int_{-1}^1|\phi|^2dx+\frac{1}{2}\int_{-1}^1|\phi_t|^2dx\le CE(t).
\end{equation}

Therefore, combining (\ref{energy3}) and (\ref{energy6}) we obtain
\begin{equation}\label{energy}
	\frac{d}{dt}E(t)\le CE(t),
\end{equation}
and thus, by Gronwall's inequality
\begin{equation}\label{energy2}
	E(t)\le E(t_0)e^{C(t-t_0)}, \ \forall t\in(0,T).
\end{equation}
\begin{remark}
	We note that it is possible to state the well-possedness of Theorem \ref{ac3_th1} for a more general right-hand side $f$. For instance, we can take $f:H_{w^{-1}}^{1/2}\longrightarrow L^{\infty}(L_w^2)$. In this case the Lipschitz condition must be replaced by $|f(t,x)|\leq c|x|$ uniformly in $t$. Moreover, whether we consider a right-hand side of the form $f+g$, with $g\in L^{2}(L^{2}_{w})$ and $f$ as stated in Theorem \ref{ac3_th1}, it is possible to follow the argumentation above to estimate the energy.
\end{remark}

\section{Proof of Proposition \ref{I_cp}  and the approximate controllability}\label{sec:4}

Using the frame stated above, namely the relation of the velocity potential in the half-plane and the free boundary, it is possible to state a continuation principle for the non-local operator $\mathcal{A}$. To this end, the following lemma connects the non-local operator $\mathcal{A}$ with the normal derivative of a local system. In what follows, we will use the ideas from \cite{hochstadt2011integral} about the airfoil equation to explain the choice of our operator $\mathcal{A}$.
\begin{lemma} Let $\phi\in H^{1/2}(-1,1)$ and $\varphi$ be such that
	\begin{equation}\label{auxiliar:01}
		\begin{cases}
			\Delta\varphi=0,& \mathbb{R}^2_{-},  \\
			\partial_n\varphi(\cdot,0)=0, & |x|>1, \\
			\varphi(x,0)=\phi(x), &|x|<1.
		\end{cases}
	\end{equation}
	Then
	\begin{equation}\label{ac3_7}
		\varphi_y(\cdot,0)=\mathcal{A}\phi,\quad \textrm{in }(-1,1),
	\end{equation}
	and $\varphi (x,0)\in H^{1/2}(\mathbb{R})$.
\end{lemma}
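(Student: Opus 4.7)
The plan is to identify $\varphi_y(\cdot,0)$ on $(-1,1)$ by inverting an airfoil-type integral equation and then appealing to Lemma \ref{rl10} to recognize the result as $\mathcal{A}\phi$. First I would note that the mixed Dirichlet--Neumann problem (\ref{auxiliar:01}) admits a unique harmonic extension with finite Dirichlet energy whenever $\phi\in H^{1/2}(-1,1)$, and the finite-energy bound is exactly what yields $\varphi(\cdot,0)\in H^{1/2}(\mathbb{R})$, settling the second claim of the lemma. The divergence-theorem computation (\ref{mc2}) applied to this $\varphi$ then gives the mean-zero condition
\begin{equation*}
\int_{-1}^{1}\varphi_y(x,0)\,dx \;=\; 0,
\end{equation*}
which will be essential for the invertibility of the airfoil equation appearing below.

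Next, since $\varphi$ is harmonic in the lower half-plane with decay at infinity, it satisfies the boundary Hilbert-transform identity (\ref{ac2_1}). The Neumann condition $\varphi_y(\cdot,0)\equiv 0$ for $|x|>1$ localises the integral to $(-1,1)$, and on $(-1,1)$ the trace $\varphi(x,0)=\phi(x)$ gives $\varphi_x(x,0)=\phi'(x)$. Combining these,
\begin{equation*}
\phi'(x) \;=\; -\frac{1}{\pi}\,P.V.\!\int_{-1}^{1}\frac{\varphi_y(\xi,0)}{x-\xi}\,d\xi,\qquad |x|<1.
\end{equation*}
Using the mean-zero condition above, one inverts this airfoil equation via the Tricomi formula already recorded as (\ref{pf10_2}),
\begin{equation*}
\varphi_y(x,0) \;=\; \frac{1}{\pi\sqrt{1-x^2}}\,P.V.\!\int_{-1}^{1}\frac{\sqrt{1-\xi^2}\,\phi'(\xi)}{x-\xi}\,d\xi,
\end{equation*}
and Lemma \ref{rl10} rewrites the right-hand side as $\partial_x\bigl(\sqrt{1-x^2}\,\pi^{-1}\,P.V.\!\int_{-1}^{1}\phi(\xi)/[\sqrt{1-\xi^2}(x-\xi)]\,d\xi\bigr)=\mathcal{A}\phi(x)$, which is precisely (\ref{ac3_7}).

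The main obstacle I expect is justifying these manipulations at the low regularity $\phi\in H^{1/2}(-1,1)$, where $\varphi_y(\cdot,0)$ generally lives only in a weighted $L^2$ space and $\phi'$ exists only as a distribution, so that neither the Hilbert transform identity on the boundary nor the inversion formula is immediate. My plan for handling this is to approximate $\phi$ by its Chebyshev partial sums $\phi_N=\sum_{n=0}^{N}a_n T_n$ and to carry out the argument mode by mode: for $\phi=T_n$ with $n\ge 1$, the identities (\ref{a-3}) show directly that the harmonic extension satisfies $\varphi_y(x,0)=n\,T_n(x)/\sqrt{1-x^2}$, which matches the expansion (\ref{aphi}) of $\mathcal{A}T_n$. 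The continuity of the harmonic-extension map in the relevant norms, together with the isometry between $\phi\in H^{1/2}_{w^{-1}}$ and the sequence $(\sqrt{n}\,a_n)\in\ell^2$ implicit in (\ref{a1}), then allows passage to the limit $N\to\infty$ to recover (\ref{ac3_7}) in $L^2_w(-1,1)$ and to close the argument.
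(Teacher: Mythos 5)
Your treatment of the identity $\varphi_y(\cdot,0)=\mathcal{A}\phi$ follows essentially the same route as the paper: write the boundary Hilbert-transform relation, localize it to $(-1,1)$ via the Neumann condition to obtain the airfoil equation, invert it, use the mean-zero condition $\int_{-1}^{1}\varphi_y(x,0)\,dx=0$ coming from the divergence theorem to kill the homogeneous solution $C/\sqrt{1-x^2}$, and finish with Lemma \ref{rl10}. The paper carries out the inversion explicitly by passing to $x=\cos y$, $\xi=\cos z$ and using the operators $K$, $K^{*}$ with $K\sin(ny)=\cos(ny)$, $K^{*}\cos(ny)=\sin(ny)$, $K^{*}1=0$; that is precisely the Tricomi inversion you invoke, executed mode by mode in the Chebyshev/trigonometric basis, so your fallback plan of arguing on the partial sums $\phi_N=\sum_{n\le N}a_nT_n$ and passing to the limit is in substance what the paper does. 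One caveat: \eqref{pf10_2} is presented in the paper only as a formal computation quoted from a reference, and this lemma is exactly where it gets justified, so citing \eqref{pf10_2} as ``already recorded'' is mildly circular; your mode-by-mode derivation via the identities \eqref{a-3} is what repairs that and should be regarded as the actual proof rather than a backup.

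The genuine gap is in the second assertion, $\varphi(\cdot,0)\in H^{1/2}(\mathbb{R})$. Finite Dirichlet energy of the harmonic extension controls only the homogeneous seminorm $\int_{\mathbb{R}}|k|\,|\widehat{\varphi}|^{2}\,dk$ of the trace; it does not by itself give $\varphi(\cdot,0)\in L^{2}(\mathbb{R})$, which is part of the norm \eqref{eq:0} and is not automatic, since $\varphi$ is harmonic on an unbounded domain and need not a priori decay in $L^{2}$ along the whole boundary line. The paper devotes a separate, nontrivial computation to exactly this point: setting $\chi=\int_{-\infty}^{x}\varphi$, writing $\|\varphi\|_{L^{2}(\mathbb{R})}^{2}=-\int H\chi\, H\varphi_{x}$, exploiting the compact support and explicit formula for $\partial_{n}\varphi$, integrating by parts, and using the Chebyshev expansion together with \eqref{oU} to arrive at $\|\varphi\|_{L^{2}(\mathbb{R})}\le C\bigl(\sum_{n\ge1}a_{n}^{2}\bigr)^{1/2}$. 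Your proposal asserts the $H^{1/2}(\mathbb{R})$ membership as an immediate consequence of the finite-energy bound, so this piece of the argument is missing and would need to be supplied.
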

\begin{proof}
	We have $\bigl.\varphi_y\bigr|_{y=0}=\partial_n\varphi$. Thus, it is enough to prove that $\partial_{n}\varphi(\cdot, 0)=\mathcal{A}\phi$. To do this, given $\psi\in L^2[0,\pi]$, let us define the following operators
	\begin{equation*}
		K\psi(y):=\frac{1}{\pi}P.V.\int_0^\pi\frac{\sin(z)}{\cos(y)-\cos(z)}\psi(z)dz 
	\end{equation*}
	and
	\begin{equation*}
		K^*\psi(y):=-\frac{1}{\pi}P.V.\int_0^\pi\frac{\sin(y)}{\cos(y)-\cos(z)}\psi(y)dz.
	\end{equation*}
	We note that these operators verify (see \cite[Section 5.2, Example 9]{hochstadt2011integral}) 
	
	\begin{align} 
		K\sin(ny)&=\cos(ny), \quad n=1,2,..., \notag\\
		K^*\cos(ny)&=\sin(ny), \quad n=1,2,..., \label{eq:properties:K}\\
		K^*1&=0.\notag
	\end{align}
	
	Following \cite{fontelos2023controllability}, by solving the Laplace equation in the half-plane, we obtain
	\begin{equation}\label{ac2_1.2}
		\varphi_x(x,0)=H(\partial_{n}\varphi),
	\end{equation}
	where $H$ is the Hilbert transform in $\mathbb{R}$, and then by using $\partial_n\varphi=0$, in $|x|>1$, it follows that
	\begin{equation}\label{pf9_1_v2}
		\varphi_x(x,0)=-\frac{1}{\pi }P.V.\int_{-1
		}^{1}\frac{\partial_n\varphi(\xi,0)}{x-\xi }d\xi,
	\end{equation}
	which is called the airfoil equation (see \cite[Chapter 5]{hochstadt2011integral}).
	
	By introducing the variables
	\[  x=\cos(y), \quad \xi=\cos(z), \]
	\[ \alpha(y)=\varphi_x(\cos(y),0)\sin(y), \quad \beta(z)=\partial_n\varphi(\cos(z),0)\sin(z), \]
	it follows that the equation (\ref{pf9_1_v2}) becomes
	\begin{equation}\label{rl1}
		\alpha(y)=-\frac{1}{\pi }P.V.\int_{0}^{\pi}\frac{\sin(y)}{\cos(y)-\cos(z)}\beta(z)dz=K^*\beta(y).
	\end{equation}
	We observe $\phi\in H^{1/2}\subset H^{1/2}_{w^{-1}}$, then $\varphi_x=\phi_x\in L^2_w$, which imply $\alpha\in L^2(0,\pi)$ and by expanding
	\begin{equation}\label{rl3}
		\alpha(y)=\sum_{n=1}^\infty a_n\sqrt{\frac{2}{\pi}}\sin(ny),
	\end{equation}
	and replacing the above expression into (\ref{rl1}) we get
	\begin{equation}\label{eq:alpha}
		K^\ast\beta(y)=\sum_{n=1}^\infty a_n\sqrt{\frac{2}{\pi}}\sin(ny).
	\end{equation}
	
	Using the aforementioned properties of the operator $K^*$ in 
	\eqref{eq:alpha},
	we obtain 
	\begin{equation}\label{eq:beta}
		\beta(y)=\frac{1}{\sqrt{\pi}}a_0+\sum_{n=1}^\infty a_n\sqrt{\frac{2}{\pi}}\cos(ny),
	\end{equation}
	which imply $\beta\in L^2(0,\pi)$, then $\displaystyle\bigl.\partial_n\varphi\bigr|_{y=0}\in L^2_w$. 
	
	In turn, using the identity of $K$ from \eqref{eq:properties:K} and \eqref{eq:alpha}, the expansion \eqref{eq:beta} can be rewritten as
	\begin{align*}
		\beta(y)&=\frac{1}{\sqrt{\pi}}a_0+\sum_{n=1}^\infty a_n\sqrt{\frac{2}{\pi}}K\sin(ny) \\
		&=\frac{1}{\sqrt{\pi}}a_0+K\alpha(y) \\
		&=\frac{1}{\sqrt{\pi}}a_0+\frac{1}{\pi}P.V.\int_0^\pi\frac{\sin(z)}{\cos(y)-\cos(z)}\alpha(z)dz.
	\end{align*}
	
	Therefore, reverting to the original variables in the above identity, one obtains
	\begin{align*}
		\partial_n\varphi(x,0)\sin(y)&=\frac{1}{\sqrt{\pi}}a_0+\frac{1}{\pi}P.V.\int_1^{-1}\frac{\sqrt{1-\xi^2}}{x-\xi}\phi_x(\xi)\sin(z)\frac{d\xi}{-\sin(z)},
	\end{align*}
	namely
	\begin{equation}\label{rl6}
		\partial_n\varphi(x)=\frac{a_0}{\sqrt{\pi}}\frac{1}{\sqrt{1-x^2}}+\frac{1}{\pi}\frac{1}{\sqrt{1-x^2}}P.V.\int_{-1}^{1}\frac{\sqrt{1-\xi^2}}{x-\xi}\phi_x(\xi)d\xi.
	\end{equation}
	
	Now, from the expansion given in \eqref{eq:beta} it follows that
	\[ \int_\pi^0\beta(y)dy=-\sqrt{\pi}a_0, \]
	which is equivalent to
	\[ \int_{-1}^1\partial_n\varphi(x)dx=\sqrt{\pi}a_0, \]
	and thanks to \eqref{auxiliar:01}, from the above expression we conclude that $a_{0}=0$. Hence, from (\ref{rl6}) and the last equality, the solution of the Laplace equation on the line, such that $\varphi_y|_{(-1,1)}\in L_w^2(-1,1)$, is given by
	\begin{equation}\label{pf10}
		\partial_n\varphi(x)=\frac{1}{\pi}\frac{1}{\sqrt{1-x^2}}P.V.\int_{-1}^{1}\frac{\sqrt{1-\xi^2}}{x-\xi}\phi_x(\xi)d\xi.
	\end{equation}
	
	Therefore, according to Lemma \ref{rl10} we have that
	\[ \partial_n\varphi(\cdot,0)=\mathcal{A}\phi, \]
	and the proof of \eqref{ac3_7} is complete.
	
	Finally, to prove $\varphi (x,0)\in H^{1/2}(\mathbb{R})$, we recall that by definition
	\begin{equation}\label{eq:0}
		\left\Vert \varphi \right\Vert _{H^{1/2}(\mathbb{R})}:=\left\Vert \varphi
		\right\Vert _{L^{2}(\mathbb{R})}+\left( \int_{-\infty }^{\infty
		}\left\vert k\right\vert \left\vert \widehat{\varphi }\right\vert
		^{2}dk\right)^{\frac{1}{2}}.
	\end{equation}
	We notice that combining \eqref{ac2_1.2} and \eqref{a1}, from the second term in the right-hand side above, we have
	\begin{equation}\label{eq:1}
		\int_{-\infty }^{\infty }\left\vert k\right\vert \left\vert \widehat{\varphi
		}\right\vert ^{2}dk=\int_{-\infty }^{\infty }\varphi H\varphi
		_{x}dx=\int_{-\infty }^{\infty }\varphi \frac{\partial \varphi }{\partial n}%
		dx=\int_{-1}^{1}\varphi \frac{\partial \varphi }{\partial n}dx=\frac{\pi }{2}\sum_{n=1}^{\infty} na_{n}^{2}.
	\end{equation}
	Now, let us focus on the first term from the right-hand side of \eqref{eq:0}. By defining $\displaystyle\chi=\int_{-\infty}^x\varphi(s,y)ds$ we have
	\begin{eqnarray*}
		\left\Vert \varphi \right\Vert _{L^{2}(\mathbb{R})}^{2} &=&\int_{-\infty
		}^{\infty }\varphi ^{2}dx=-\int_{-\infty }^{\infty }\chi\varphi
		_{x}dx.
	\end{eqnarray*}
	Then, by definition of the Hilbert transform and \eqref{pf10}, it follows that
	\begin{eqnarray*}
		\left\Vert \varphi \right\Vert _{L^{2}(\mathbb{R})}^{2}&=&-\int_{-\infty }^{\infty }H\chi H\varphi _{x}dx\\
		&=&-\int_{-1}^{1}H\chi
		\frac{\partial \varphi }{\partial n}dx \\
		&=&-\int_{-1}^{1}H\chi \partial_{x}\left( \sqrt{1-x^{2}}\frac{1}{\pi }P.V.\int_{-1}^{1}%
		\frac{\phi (\xi )}{\sqrt{1-\xi ^{2}}(x-\xi )}d\xi \right) dx.
	\end{eqnarray*}
	Now, after integration by parts on the last expression and recalling the definition of $\chi$, we obtain
	\begin{eqnarray*}
		\left\Vert \varphi \right\Vert _{L^{2}(\mathbb{R})}^{2}&=&\int_{-1}^{1}H\varphi \left( \sqrt{1-x^{2}}\frac{1}{\pi }P.V.\int_{-1}^{1}%
		\frac{\phi (\xi )}{\sqrt{1-\xi ^{2}}(x-\xi )}d\xi \right) dx \\
		&\leq &\left( \int_{-\infty }^{\infty }(H\varphi )^{2}dx\right) ^{\frac{1}{2}%
		}\left( \int_{-1}^{1}\left( \sqrt{1-x^{2}}\frac{1}{\pi }P.V.\int_{-1}^{1}%
		\frac{\phi (\xi )}{\sqrt{1-\xi ^{2}}(x-\xi )}d\xi \right) ^{2}dx\right) ^{%
			\frac{1}{2}},
	\end{eqnarray*}
	where for the last term at the right-hand side above, we have used the Cauchy-Schwarz inequality. Moreover, by expanding $\displaystyle
	\phi(x)=\sum_{n=0}^\infty a_nT_n(x)$ and using the first identity from \eqref{a-3}, we get that the above inequality becomes
	\begin{eqnarray}\label{eq:22}
		\left\Vert \varphi \right\Vert _{L^{2}(\mathbb{R})}^{2}
		&\leq &\left\Vert \varphi \right\Vert_{L^{2}(\mathbb{R})}\left(
		\int_{-1}^{1}(1-x^{2})\left( \sum_{n=1}^{\infty} a_{n}U_{n-1}(x)\right) ^{2}dx\right)^{\frac{1}{2}} \notag\\
		&\leq &\left\Vert \varphi \right\Vert_{L^{2}(\mathbb{R})}\left( \int_{-1}^{1}\sqrt{1-x^{2}}\left(\sum_{n=1}^{\infty} a_{n}U_{n-1}(x)\right) ^{2}dx\right) ^{\frac{1}{2}}
		\\
		&\leq &C\left\Vert \varphi \right\Vert _{L^{2}(\mathbb{R})}\left( \sum_{n=1}^{\infty} a_{n}^{2}\right)^{\frac{1}{2}}\notag,
	\end{eqnarray}
	where we have used \eqref{oU} for the last inequality. Hence, combining \eqref{eq:1} and \eqref{eq:22} on \eqref{eq:0} we conclude that there exists a constant $C>0$ such that
	\begin{equation*}
		\left\Vert \varphi \right\Vert _{H^{1/2}(\mathbb{R})}\leq
		C\left\Vert \phi \right\Vert _{H_{w^{-1}}^{1/2}}.
	\end{equation*}
\end{proof}
The last lemma and the setting established so far are the keys to formulating the following unique continuation principle for the linearized water-waves problem with side walls. Notice that, although we are using a formulation of the problem on the real line, we take advantage of a relation with the Laplace equation inside the domain. Thus, the classical unique continuation principle for the Laplacian is transmitted to the Dirichlet-Neumann operator on the boundary.
\begin{lemma}\label{lem:unique:continuation}
	Let $\mathcal{I}\subset(-1,1)$ be an open set. If
	\[ \phi=\mathcal{A}\phi=0 \ \text{in }\mathcal{I}, \]
	then $\phi\equiv0$ in $(-1,1)$.
\end{lemma}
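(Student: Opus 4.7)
The plan is to convert this statement about the non-local operator $\mathcal{A}$ into a classical unique continuation question for a harmonic function, using the Dirichlet-to-Neumann identification supplied by the preceding lemma. Given $\phi \in H^{1/2}_{w^{-1}}(-1,1)$ with $\phi = \mathcal{A}\phi = 0$ on the open set $\mathcal{I} \subset (-1,1)$, I would first produce the harmonic extension $\varphi$ in $\mathbb{R}^2_-$ that solves \eqref{auxiliar:01}, namely $\Delta \varphi = 0$ in $\mathbb{R}^2_-$, $\varphi(x,0) = \phi(x)$ on $(-1,1)$, and $\partial_n \varphi(x,0) = 0$ on $|x|>1$. The preceding lemma then furnishes $\varphi_y(\cdot,0) = \mathcal{A}\phi$ on $(-1,1)$. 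On the segment $\mathcal{I} \times \{0\}$ both $\varphi$ and $\partial_y \varphi$ therefore vanish, so $\varphi$ is a harmonic function in $\mathbb{R}^2_-$ with vanishing Cauchy data on an open piece of the boundary.

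Next I would invoke the classical unique continuation principle for the Laplacian (Holmgren's theorem). Equivalently, one can argue by Schwarz reflection: the vanishing of $\varphi$ on $\mathcal{I}\times\{0\}$ allows the odd reflection across the line, and the simultaneous vanishing of $\partial_y \varphi$ forces the reflected function to be harmonic in a full two-sided neighborhood of $\mathcal{I}\times\{0\}$ and to vanish identically there. This yields $\varphi \equiv 0$ on an open subset of $\mathbb{R}^2_-$. Since $\mathbb{R}^2_-$ is connected and harmonic functions are real-analytic, $\varphi \equiv 0$ throughout $\mathbb{R}^2_-$. Taking the trace on $(-1,1)$ gives $\phi \equiv 0$, which is the desired conclusion.

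The substantive step is the reflection/Holmgren argument, and its main obstacle is regularity: $\varphi$ and $\partial_y \varphi$ must make sense up to $\mathcal{I} \times \{0\}$ in a strong enough sense to apply the unique continuation result. This is where the setting of the previous lemma is essential, because the auxiliary problem \eqref{auxiliar:01} was solved explicitly via the airfoil equation, yielding an expansion of $\varphi$ and $\partial_n \varphi$ in terms of the Chebyshev coefficients of $\phi$ and giving local $H^1$ regularity in a neighborhood of any interior point of $\mathcal{I}$. Once this regularity is in hand, the reflection argument applies verbatim and the proof is complete.
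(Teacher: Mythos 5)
Your proof follows the same route as the paper's: extend $\phi$ harmonically to the half-plane via the auxiliary problem \eqref{auxiliar:01}, use the preceding lemma to identify $\mathcal{A}\phi$ with $\partial_n\varphi$ so that the Cauchy data of $\varphi$ vanish on $\mathcal{I}\times\{0\}$, and conclude by unique continuation for harmonic functions. Your reflection/Holmgren elaboration merely fills in the unique continuation step that the paper states tersely (and sketches in the remark following its proof), so the two arguments are essentially identical.
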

\begin{proof}
	The key point is the relation between the conservation laws on the free surface and the solution of an elliptic problem on the interior of the domain. That is, given $\phi\in H^{1/2}(-1,1)$, by the trace theorem, there exists $\varphi\in H^1(\mathbb{R}\times\mathbb{R}_{-})$ such that
	\begin{equation*}
		\begin{cases}
			\Delta\varphi=0,& \mathbb{R}^2_{-},  \\
			\partial_n\varphi(\cdot,0)=0, & |x|>1, \\
			\varphi=\phi, & |x|<1.
		\end{cases}
	\end{equation*}
	From (\ref{ac3_7}) and the hypothesis we have
	\[ \partial_n\varphi=\varphi_y=\mathcal{A}\phi=0, \quad\text{in }\mathcal{I}. \]
	Moreover $\varphi=0$ in $\mathcal{I}$. Therefore, by the unique continuation principle for the Laplace operator on the half-plane,
	\[ \phi\equiv0, \quad\text{in }(-1,1). \]
\end{proof}

\begin{remark}
	Concerning the unique continuation principle above and the way it was applied here. In the two-dimensional case, by following \cite{linares2022unique} one extends the analytic solution of the laplacian to a ball in $\mathbb{R}^2$ containing $\mathcal{I}$; since it contains an acummulation point where $\varphi$ is zero, one has $\varphi$ is identically zero.
	
	In the $n$-dimensional case, on the interval $\mathcal{I}$, one can extend the solution by zero to a ball. Then, by using Green functions, one concludes the extension is analytic and thus is zero inside the ball.
\end{remark}

We obtain our first main result in this work by extending the last result to the evolutionary case, as is classical for the linear wave equation for open sets in time.
\begin{proof}[\textbf{Proof of Proposition \ref{I_cp}}]
	Let $\mathcal{I}\subset(-1,1)$ an open subset and $t_0\in(0,T)$ such that $\{t_0\}\times \mathcal{I}\subset M$, where $M$ is an open subset of $(0,T)\times (-1,1)$ that verify 
	\begin{equation}\label{eq:hipothesis}
		\phi(t,x)=0,\mbox{ for all }(t,x)\in M.
	\end{equation}
	Then, we have
	\begin{equation}\label{ac3_10}
		\phi(t_0,x)=\phi_t(t_0,x)=\phi_{tt}(t_0,x)=0, \text{ for all }x\in \mathcal{I}.
	\end{equation}
	
	We note that from \eqref{eq:hipothesis} we have $\Bigl.\phi_{tt}\Bigr|_M\equiv0$, and since $\phi$ solves the main equation from the system (\ref{ac3_9}), we obtain $\mathcal{A}\phi=f(\phi)$. Moreover, using that $f$ is Lipschitz and (\ref{ac3_10}), it follows that $f(\phi)=0$ for all $(t,x)\in M$. Therefore, from Lemma \ref{lem:unique:continuation}, we obtain $\phi(t_0,\cdot)\equiv0$ on $(-1,1)$. 
	
	In turn, in the previous efforts we showed that $\Bigl.\mathcal{A}\phi\Bigr|_M\equiv0$. Noticing that $\mathcal{A}$ is an operator independent on time, we get $\Bigl.\mathcal{A}\phi_t\Bigr|_M\equiv0$.  Thus, thanks to \eqref{ac3_10}, we can apply Lemma \ref{lem:unique:continuation} to $\phi_{t}$ implying
	$\phi_t(t_0,\cdot)\equiv0$ on $(-1,1)$.
	
	Finally, from the two facts above and the uniqueness of the solution for the system \eqref{ac3_9}, $\phi\equiv C=0$ on $(0,T)\times (-1,1)$.
\end{proof}

As is customary in control theory, the above result provides us with a controllability theorem in a classical way, as follows.

\begin{proof}[\textbf{Proof of Theorem \ref{I_ac}}]
	Let $\phi$ be the solution of (\ref{ac3_11}), with initial data \break $[\phi_0,\phi_1]\in H^{1/2}_{w^{-1}}\times L^2$. Defining
	\[ R(T;[\phi_0,\phi_1])=\left\{[\phi(T),\phi_t(T)]: v\in L^2(0,T;L_w^2)\right\}, \]
	we notice that $R$ is an affine subspace of $H_{w^{-1}}^{1/2}\times L^2$. Since we are dealing with a linear problem, it is
	sufficient to consider null initial conditions $\phi_0=\phi_1=0$.\\
	We observe that the adjoint system related to the system \eqref{ac3_11} is given by 
	\begin{equation}\label{system:adjoint}
		\begin{cases}
			z_{tt}+\mathcal{A}z=0, \quad &(t,x)\in(0,\infty)\times(-1,1), \\
			z(T,x)=g_0(x), &x\in(-1,1), \\
			z_t(T,x)=-g_1(x), &x\in(-1,1), 
		\end{cases}
	\end{equation}
	where $[g_0,g_1]\in H^{1/2}_{w^{-1}}\times L^2$. It is known that the density of $R(T;[\phi_{0},\phi_{1}])$ in $H_{w^{-1}}^{1/2}\times L^{2}$ is equivalent to proving that if \begin{equation}\label{ac3_12}
		(g_0,\phi_t(T;v))_{L^2}+(g_1,\phi(T;v))_{L^2}=0, \quad\forall v\in L^2(\mathcal{I}),
	\end{equation}
	then $g_{0}=g_{1}=0$. To show this, multiply both sides of the main equation from \eqref{system:adjoint} by the solution $\phi(v)$ of (\ref{ac3_11}) and after integrating by parts, we obtain
	\[ \int\int_{(0,T)\times \mathcal{I}}z v=(g_0,\phi_t(T;v))_{L^2}+(g_1,\phi(T;v))_{L^2}. \]
	From (\ref{ac3_12}) we deduce
	\[ z=0, \quad\text{in }(0,T)\times \mathcal{I}. \]
	Thus, from Proposition \ref{I_cp} we conclude
	\[ z\equiv 0, \quad\text{in }(0,T)\times(-1,1), \]
	which implies $g_0=g_1=0$, and the theorem is proved.
\end{proof}

\section{Comments}

Concerning PDE and non-local operators, in recent years, many papers on unique continuation principles and inverse source detection problems have been written for non-local operators in the time variable, see for example \cite{lin2022classical} and the works of Yamamoto et al. \cite{kian2021uniqueness,li2021inverse,liu2021recovering}. As far as non-local spatial operators are concerned, we mention the works of G.Ponce et. al. \cite{kenig2020uniqueness,linares2022unique}, where they study unique continuation principles for the Benjamin-Ono equation and some related dispersive models. We also mention \cite{bhattacharyya2021inverse}, where a continuation principle is established for a PDE involving the Fractional Laplacian to study the inverse problem of determining the unknown coefficients from the exterior measurements of the corresponding Cauchy data. In this direction,  a possible future work could be addressing the problem of identifying the location where a fluid injection occurs on the solid walls of the domain from wave measurements inside the container, provided we know the time at which these jet injections occur  (see \cite{yamamoto1995stability}).

In the literature, several approaches have been explored to address controllability in systems featuring spatial non-local terms, see for instance \cite{FLZ-2016,BHSM-2019,FCLNN2019}. To prove approximate controllability results, classical methodology has been applied in works such as \cite{Z:2021,LR-W:2021,W:2019,W-Z:2020} for the fractional Laplace operator. However, our approach distinguishes itself by establishing a unique continuation result for a non-local system through its connection with a local system via the trace operator. 

Since the model easily accommodates the inclusion of a surface tension term and the study of capillary waves as it was considered in \cite{kim2015capillary}, one interesting extension would be to derive the corresponding operator in this case and study the controllability issues accordingly.

Finally, concerning the right-hand side term, let us comment the following. First, the derivation of this model inherently facilitates the incorporation of control mechanisms involving the injection of fluid jets through the container walls, that is $f=v\mathbf{1}_{\mathcal{I}}$ (see \cite{fontelos2023controllability}). Second, since our unique continuation result is valid for a nonlinear equation it would be interesting to consider the full Bernoulli equation with nonlinear terms written as the right hand side $f(\phi)$ as in Proposition \ref{I_cp}. Several strategies involving fixed-point theorems can be applied to investigate controllability (see for instance \cite{Zuazua:93:semilinear}). This is particularly promising since the linear system exhibits exact (\cite{fontelos2023controllability}) and approximate control properties. Third, a general right-hand side is important if one wants to study the inverse problem of spatial source recovering from measurements of the normal derivative on the boundary, by the controllability method, as in \cite{yamamoto1995stability}.

\bibliographystyle{abbrv}
\bibliography{referencias}

\begin{thebibliography}{10}

\bibitem{abramowitz1972}
M.~Abramowitz and I.~A. Stegun.
\newblock {\em Abramowitz, Handbook of Mathematical Functions with Formulas,
  Graphs, and Mathematical Tables}.
\newblock New York: Dover Publications, 1972.

\bibitem{alazard2017stabilization}
T.~Alazard.
\newblock Stabilization of the water-wave equations with surface tension.
\newblock {\em Annals of PDE}, 3(2):1--41, 2017.

\bibitem{alazard2018boundary}
T.~Alazard.
\newblock Boundary observability of gravity water waves.
\newblock In {\em Annales de l'Institut Henri Poincar{\'e} C, Analyse non
  lin{\'e}aire}, volume~35, pages 751--779. Elsevier, 2018.

\bibitem{alazard2018control}
T.~Alazard, P.~Baldi, and D.~Han-Kwan.
\newblock Control of water waves.
\newblock {\em Journal of the European Mathematical Society}, 20(3):657--745,
  2018.

\bibitem{benjamin1979gravity}
T.~B. Benjamin and J.~C. Scott.
\newblock Gravity-capillary waves with edge constraints.
\newblock {\em Journal of Fluid Mechanics}, 92(2):241--267, 1979.

\bibitem{bhattacharyya2021inverse}
S.~Bhattacharyya, T.~Ghosh, and G.~Uhlmann.
\newblock Inverse problems for the fractional-laplacian with lower order
  non-local perturbations.
\newblock {\em Transactions of the American Mathematical Society},
  374(5):3053--3075, 2021.

\bibitem{BHSM-2019}
U.~Biccari and V.~Hern\'{a}ndez-Santamar\'{\i}a.
\newblock Null controllability of linear and semilinear nonlocal heat equations
  with an additive integral kernel.
\newblock {\em SIAM J. Control Optim.}, 57(4):2924--2938, 2019.

\bibitem{bourgain1997compactness}
J.~Bourgain.
\newblock On the compactness of the support of solutions of dispersive
  equations.
\newblock {\em International Mathematics Research Notices}, 1997(9):437--447,
  1997.

\bibitem{brimacombe1985toward}
J.~Brimacombe and A.~Bustos.
\newblock Toward a basic understanding of injection phenomena in the copper
  converter.
\newblock {\em Physical Chemistry of Extractive Metallurgy}, pages 327--351,
  1985.

\bibitem{CH:1990}
T.~Cazenave and A.~Haraux.
\newblock {\em An introduction to semilinear evolution equations}, volume~13 of
  {\em Oxford Lecture Series in Mathematics and its Applications}.
\newblock The Clarendon Press, Oxford University Press, New York, 1998.
\newblock Translated from the 1990 French original by Yvan Martel and revised
  by the authors.

\bibitem{escauriaza2007uniqueness}
L.~Escauriaza, C.~E. Kenig, G.~Ponce, and L.~Vega.
\newblock On uniqueness properties of solutions of the k-generalized kdv
  equations.
\newblock {\em Journal of Functional Analysis}, 244(2):504--535, 2007.

\bibitem{FCLNN2019}
E.~Fern\'{a}ndez-Cara, J.~L\'{\i}maco, D.~Nina-Huaman, and M.~R. N\'{u}\~{n}ez
  Ch\'{a}vez.
\newblock Exact controllability to the trajectories for parabolic {PDE}s with
  nonlocal nonlinearities.
\newblock {\em Math. Control Signals Systems}, 31(3):415--431, 2019.

\bibitem{FLZ-2016}
E.~Fern\'{a}ndez-Cara, Q.~L\"{u}, and E.~Zuazua.
\newblock Null controllability of linear heat and wave equations with nonlocal
  spatial terms.
\newblock {\em SIAM J. Control Optim.}, 54(4):2009--2019, 2016.

\bibitem{fontelos2017bottom}
M.~A. Fontelos, R.~Lecaros, J.~L{\'o}pez, and J.~H. Ortega.
\newblock Bottom detection through surface measurements on water waves.
\newblock {\em SIAM Journal on control and optimization}, 55(6):3890--3907,
  2017.

\bibitem{fontelos2020gravity}
M.~A. Fontelos and J.~L\'{o}pez-R\'{\i}os.
\newblock Gravity waves oscillations at semicircular and general 2{D}
  containers: an efficient computational approach to 2{D} sloshing problem.
\newblock {\em Z. Angew. Math. Phys.}, 71(3):Paper No. 75, 24, 2020.

\bibitem{fontelos2023controllability}
M.~A. Fontelos and J.~L\'{o}pez-R\'{\i}os.
\newblock Controllability of surface gravity waves and the sloshing problem.
\newblock {\em ESAIM Control Optim. Calc. Var.}, 29:Paper No. 65, 29, 2023.

\bibitem{godoy2008modeling}
E.~Godoy, A.~Osses, J.~H. Ortega, and A.~Valencia.
\newblock Modeling and control of surface gravity waves in a model of a copper
  converter.
\newblock {\em Applied Mathematical Modelling}, 32(9):1696--1710, 2008.

\bibitem{graham1983new}
J.~Graham-Eagle.
\newblock A new method for calculating eigenvalues with applications to
  gravity-capillary waves with edge constraints.
\newblock In {\em Mathematical Proceedings of the Cambridge Philosophical
  Society}, volume~94, pages 553--564. Cambridge University Press, 1983.

\bibitem{hochstadt2011integral}
H.~Hochstadt.
\newblock {\em Integral equations}, volume~91.
\newblock John Wiley \& Sons, 2011.

\bibitem{kenig2020uniqueness}
C.~E. Kenig, G.~Ponce, and L.~Vega.
\newblock Uniqueness properties of solutions to the benjamin-ono equation and
  related models.
\newblock {\em Journal of Functional Analysis}, 278(5):108396, 2020.

\bibitem{kian2021uniqueness}
Y.~Kian, Z.~Li, Y.~Liu, and M.~Yamamoto.
\newblock The uniqueness of inverse problems for a fractional equation with a
  single measurement.
\newblock {\em Mathematische annalen}, 380(3-4):1465--1495, 2021.

\bibitem{kim2015capillary}
H.~J. Kim, M.~A. Fontelos, and H.~J. Hwang.
\newblock Capillary oscillations at the exit of a nozzle.
\newblock {\em IMA J. Appl. Math.}, 80(4):931--962, 2015.

\bibitem{lannes2013water}
D.~Lannes.
\newblock {\em The water waves problem: mathematical analysis and asymptotics},
  volume 188.
\newblock American Mathematical Soc., 2013.

\bibitem{LL2019}
C.~Laurent and M.~L\'{e}autaud.
\newblock Quantitative unique continuation for operators with partially
  analytic coefficients. {A}pplication to approximate control for waves.
\newblock {\em J. Eur. Math. Soc. (JEMS)}, 21(4):957--1069, 2019.

\bibitem{LRL:2012}
J.~Le~Rousseau and G.~Lebeau.
\newblock On {C}arleman estimates for elliptic and parabolic operators.
  {A}pplications to unique continuation and control of parabolic equations.
\newblock {\em ESAIM Control Optim. Calc. Var.}, 18(3):712--747, 2012.

\bibitem{li2021inverse}
Z.~Li, Y.~Liu, and M.~Yamamoto.
\newblock Inverse source problem for a one-dimensional time-fractional
  diffusion equation and unique continuation for weak solutions.
\newblock {\em arXiv preprint arXiv:2112.01018}, 2021.

\bibitem{lin2022classical}
C.-L. Lin and G.~Nakamura.
\newblock Classical unique continuation property for multi-term time-fractional
  evolution equations.
\newblock {\em Mathematische Annalen}, pages 1--24, 2022.

\bibitem{linares2022unique}
F.~Linares and G.~Ponce.
\newblock On unique continuation for non-local dispersive models.
\newblock {\em Vietnam Journal of Mathematics}, pages 1--27, 2022.

\bibitem{liu2021recovering}
J.~Liu, C.~Sun, and M.~Yamamoto.
\newblock Recovering the weight function in distributed order fractional
  equation from interior measurement.
\newblock {\em Applied Numerical Mathematics}, 168:84--103, 2021.

\bibitem{LR-W:2021}
C.~Louis-Rose and M.~Warma.
\newblock Approximate controllability from the exterior of space-time
  fractional wave equations.
\newblock {\em Appl. Math. Optim.}, 83(1):207--250, 2021.

\bibitem{mottelet2000controllability}
S.~Mottelet.
\newblock Controllability and stabilization of a canal with wave generators.
\newblock {\em SIAM journal on control and optimization}, 38(3):711--735, 2000.

\bibitem{nersisyan2014generation}
H.~Nersisyan, D.~Dutykh, and E.~Zuazua.
\newblock Generation of 2d water waves by moving bottom disturbances.
\newblock {\em IMA Journal of Applied Mathematics}, 80(4):1235--1253, 2014.

\bibitem{reid1995control}
R.~M. Reid.
\newblock Control time for gravity-capillary waves on water.
\newblock {\em SIAM journal on control and optimization}, 33(5):1577--1586,
  1995.

\bibitem{reid1985boundary}
R.~M. Reid and D.~L. Russell.
\newblock Boundary control and stability of linear water waves.
\newblock {\em SIAM journal on control and optimization}, 23(1):111--121, 1985.

\bibitem{rosier2006global}
L.~Rosier and B.-Y. Zhang.
\newblock Global stabilization of the generalized korteweg--de vries equation
  posed on a finite domain.
\newblock {\em SIAM Journal on Control and Optimization}, 45(3):927--956, 2006.

\bibitem{saut1987unique}
J.-C. Saut and B.~Scheurer.
\newblock Unique continuation for some evolution equations.
\newblock {\em Journal of differential equations}, 66(1):118--139, 1987.

\bibitem{su2020stabilizability}
P.~Su, M.~Tucsnak, and G.~Weiss.
\newblock Stabilizability properties of a linearized water waves system.
\newblock {\em Systems \& Control Letters}, 139:104672, 2020.

\bibitem{su2021strong}
P.~Su, M.~Tucsnak, and G.~Weiss.
\newblock Strong stabilization of small water waves in a pool.
\newblock {\em IFAC-PapersOnLine}, 54(9):378--383, 2021.

\bibitem{W:2019}
M.~Warma.
\newblock Approximate controllability from the exterior of space-time
  fractional diffusive equations.
\newblock {\em SIAM J. Control Optim.}, 57(3):2037--2063, 2019.

\bibitem{W-Z:2020}
M.~Warma and S.~Zamorano.
\newblock Analysis of the controllability from the exterior of strong damping
  nonlocal wave equations.
\newblock {\em ESAIM Control Optim. Calc. Var.}, 26:Paper No. 42, 34, 2020.

\bibitem{yamamoto1995stability}
M.~Yamamoto.
\newblock Stability, reconstruction formula and regularization for an inverse
  source hyperbolic problem by a control method.
\newblock {\em Inverse problems}, 11(2):481--496, 1995.

\bibitem{Z:2021}
S.~Zamorano.
\newblock Approximate controllability from the exterior for a nonlocal
  {S}obolev-{G}alpern type equation.
\newblock {\em Math. Notes}, 110(3-4):609--622, 2021.

\bibitem{zhang1992unique}
B.~Zhang.
\newblock Unique continuation for the korteweg--de vries equation.
\newblock {\em SIAM journal on mathematical analysis}, 23(1):55--71, 1992.

\bibitem{Zuazua:93:semilinear}
E.~Zuazua.
\newblock Exact controllability for semilinear wave equations in one space
  dimension.
\newblock {\em Ann. Inst. H. Poincar\'{e} C Anal. Non Lin\'{e}aire},
  10(1):109--129, 1993.

\bibitem{zuazua2007controllability}
E.~Zuazua.
\newblock Controllability and observability of partial differential equations:
  some results and open problems.
\newblock In {\em Handbook of differential equations: evolutionary equations},
  volume~3, pages 527--621. Elsevier, 2007.

\end{thebibliography}
\end{document}